\newtheorem{theorem}{{Theorem}}[section]
\newtheorem{proposition}[theorem]{{Proposition}}
\newtheorem{lemma}[theorem]{{Lemma}}
\newtheorem{corollary}[theorem]{{Corollary}}
\newtheorem{remark}[theorem]{{Remark}}
\newtheorem{example}[theorem]{{Example}}
\definecolor{greenbf}{rgb}{0, 0.7 ,0.3}
\def\C{\mathbb{C}}
\def\R{\mathbb{R}}
\def\Q{\mathbb{Q}}
\def\Z{\mathbb{Z}}
\def\S{\mathbb{S}}
\def\T{\mathbb{T}}
\def\D{\mathbb{D}}
\def\HH{\mathcal{H}}
\def\LL{\mathcal{L}}
\def\J{\mathcal{J}}
\def\I{\mathcal{I}}
\def\Hol{{\operatorname{Hol}}}
\def\Loop{{\operatorname{Loop}}}
\def\Hom{{\operatorname{Hom}}}
\def\Flux{{\operatorname{Flux}}}
\def\Ham{{\operatorname{Ham}}}
\def\SO{{\operatorname{SO}}}
\def\U{{\operatorname{U}}} 
\def\Sp{{\operatorname{Sp}}}
\def\Sp{{\operatorname{Sp}}}
\def\SCont{{\operatorname{SCont}}}
\def\Cont{{\operatorname{Cont}}}
\def\Symp{{\operatorname{Symp}}}
\def\CSp{{\operatorname{CSp}}}
\def\Index{{\operatorname{Index}}}
\def\Diff{{\operatorname{Diff}}}
\def\Id{{\operatorname{Id}}}
\author{Souheib Allout}
\address{Fakult\"{a}t f\"{u}r Mathematik,
Ruhr-Universit\"at Bochum, Bochum, Germany}
\email{souheib.allout@rub.de}
\author{Murat Sa\u glam}
\address{Mathematisches Institut,
Universit\"at zu K\"oln, K\"oln, Germany}
\email{msaglam@math.uni-koeln.de}
\begin{document}

\title[]{On contact mapping classes of prequantizations}

\begin{abstract}
We present examples of  prequantizations over integral symplectic manifolds which admit infinitely many smoothly trivial contact mapping classes. These classes are given by the connected components of the strict contactomorphism group which project to the identity component of the symplectomorphism group of the base manifold. Along the way, we study the lifting problem of symplectomorphisms of the base manifold to strict contactomorphisms of the prequantization.
\end{abstract}

\maketitle
 
\section{Introduction} One of the fundamental questions in contact topology is to understand the group $\Cont(V,\xi)$ of co-orientation preserving contactomorphisms of a given co-oriented  contact structure $\xi$ on a closed manifold $V$. Understanding $\Cont(V,\xi)$ requires studying its identity component $\Cont^0(V,\xi)$ and the contact mapping class group $\pi_0(\Cont(V,\xi))$. The aim of this paper is to study  $\pi_0(\Cont(V,\xi))$ for a particular class of contact manifolds called prequantizations. These are total spaces of principal circle bundles $V\to M$, with a connection 1-form  $\lambda$ which is a contact form whose Reeb vector field generates the circle action and, therefore, the curvature form of $\lambda$ is an integral symplectic form $\omega$ on $M$. 

Concerning the contact mapping class group, it is interesting to study the homomorphism
\begin{equation*}\label{J}
\J:\pi_0(\Cont(V,\xi))\to \pi_0(\Diff(V))    
\end{equation*} 
where  $\Diff(V)$ is the group of diffeomorphisms of $V$ which preserve the orientation determined by the co-orientation of $\xi$. Our main results, which rely solely on algebraic topological methods, provide a large class of examples of prequantizations $(V,\lambda)$ for which the kernel of $\J$ contains an infinite subgroup of $\pi_0(\Cont(V,\xi))$. This subgroup is given as follows. Given a prequantization $(V,\lambda)\to (M,\omega)$, we get a natural homomorphism 
\begin{equation}\label{scont to symp at intro} 
\pi_0(\SCont(V,\lambda))\to \pi_0(\Symp(M,\omega))    
\end{equation}
where $\SCont(V,\lambda)$ is the group of  strict contactomorphisms, that is,  contactomorphisms which preserve the contact form $\lambda$, and $\Symp(M,\omega)$ is the group of symplectomorphisms of $(M,\omega)$. It turns out that the kernel of the above homomorphism is isomorphic to $H^1_{dR}(M,\Z)/\Gamma_\omega$, where $\Gamma_\omega$ is the flux group of $(M,\omega)$, see Theorem \ref{comp. of H}. Then we show that for a large class of integral symplectic manifolds for which $\Gamma_\omega$ turns out to be trivial, the composition   
\begin{equation*}\label{I}
\I:H^1_{dR}(M,\Z)\to \pi_0(\SCont(V,\lambda))\to \pi_0(\Cont(V,\xi)) \end{equation*} 
is injective. More precisely we prove the following. 
\begin{theorem}\label{main thm at intro}
Let $(M,\omega)$ be a closed integral symplectic manifold such that either $[\omega]$ vanishes on $\pi_2(M)$ or $(M,\omega)$ is monotone on $\pi_2(M)$ with a non-vanishing monotonicity constant. Suppose, in addition, that any abelian subgroup of $\pi_1(M)$ has rank at most one. If $(V,\lambda)\to (M,\omega)$ is a prequantization, then the homomorphism $\I$ is injective.
\end{theorem}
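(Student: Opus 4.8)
The plan is to treat Theorem~\ref{comp. of H} as a black box and reduce the statement to two independent assertions: that the flux group $\Gamma_\omega$ vanishes, and that the natural map $\pi_0(\SCont(V,\lambda))\to\pi_0(\Cont(V,\xi))$ does not collapse the image of $H^1_{dR}(M,\Z)$. Indeed, by Theorem~\ref{comp. of H} the first arrow of $\I$ factors as $H^1_{dR}(M,\Z)\twoheadrightarrow H^1_{dR}(M,\Z)/\Gamma_\omega\hookrightarrow\pi_0(\SCont(V,\lambda))$, the second map being the isomorphism onto the kernel. Thus $\I$ is injective on all of $H^1_{dR}(M,\Z)$ precisely when (i) $\Gamma_\omega=0$, so that the first arrow is already injective, and (ii) for every nonzero integral class $[\beta]$ the corresponding strict contactomorphism $\Phi_\beta$ is not contact isotopic to the identity. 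I would establish (i) first: the hypothesis that $[\omega]$ vanishes on $\pi_2(M)$ or that $(M,\omega)$ is monotone controls $\pi_1(\Symp(M,\omega))$ enough to locate all flux, while the rank condition on abelian subgroups of $\pi_1(M)$ rules out the toroidal directions that are the only source of nontrivial flux; together these force $\Gamma_\omega=0$.

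For (ii) I would first make $\Phi_\beta$ explicit. Choosing a closed integral representative $\beta$, the class $\I([\beta])$ is the time-one map of a lift to $V$ of the symplectic isotopy generated by the $\omega$-dual of $\beta$; equivalently, up to the Gray-stability diffeomorphism interpolating the contact forms $\lambda+t\pi^*\beta$, it is the gauge transformation $v\mapsto v\cdot f_\beta(\pi(v))$ associated to the map $f_\beta\colon M\to \S^1$ with $f_\beta^*d\theta=\beta$. The first, soft, detection mechanism is the induced outer automorphism of $\pi_1(V)$. In the central extension
\[ 1\longrightarrow \Z\langle c\rangle \longrightarrow \pi_1(V)\longrightarrow \pi_1(M)\longrightarrow 1 \]
determined by the Euler class $[\omega]$, the map $(\Phi_\beta)_*$ twists each $g$ by $c^{\langle[\beta],\bar g\rangle}$, where $\bar g$ is the image of $g$ in $H_1(M)$. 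Whenever this is a nontrivial outer automorphism --- which, using that the fiber class $c$ is detected through the asphericity or monotonicity hypothesis on $\pi_2$, happens for $[\beta]$ outside the image of the contraction $L\colon H_1(M)\to H^1(M)$, $\bar h\mapsto[\omega](\bar h,\cdot)$ --- the class $\Phi_\beta$ is already nontrivial in $\pi_0(\Diff(V))$, hence a fortiori in $\pi_0(\Cont(V,\xi))$.

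The remaining, and genuinely contact, case is that of the smoothly trivial classes, those with $[\beta]\in\operatorname{im}L$; this is where the main difficulty lies and where the hypotheses are used in full. Here $\Phi_\beta$ lies in the stabilizer of $\xi$ inside $\Diff_0(V)$, so by the Gray--Palais fibration $\Cont(V,\xi)\cap\Diff_0(V)\hookrightarrow\Diff_0(V)\to\mathcal{O}_\xi$, where $\mathcal{O}_\xi$ is the $\Diff_0$-orbit of $\xi$, the class $\I([\beta])$ is the image under the connecting map $\partial\colon\pi_1(\mathcal{O}_\xi)\to\pi_0(\Cont(V,\xi))$ of the loop of contact structures $\eta^\beta_t=(F_t)_*\xi$ obtained from any smooth trivialization $F_t$ of $\Phi_\beta$. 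I would then pass to the ambient space $\mathcal{D}$ of co-oriented hyperplane fields (almost contact structures) and compute the class of $\eta^\beta$ there by obstruction theory: its primary obstruction is pinned down by $\pi^*[\beta]$, which is nonzero because $\pi^*\colon H^1(M;\R)\to H^1(V;\R)$ is injective by the Gysin sequence, and the rank-at-most-one hypothesis on abelian subgroups of $\pi_1(M)$ guarantees that this class survives in $\pi_1(\mathcal{D})$ modulo the image of $\pi_1(\Diff_0(V))$. Nontriviality in the formal space forces nontriviality in $\pi_1(\mathcal{O}_\xi)$, hence $\partial(\eta^\beta)=\I([\beta])\neq 0$.

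The step I expect to be the main obstacle is exactly this last one: computing the obstruction class of $\eta^\beta$ in $\pi_1(\mathcal{D})$ and showing it is not annihilated by the image of $\pi_1(\Diff_0(V))$. Controlling that image is precisely what the rank condition on $\pi_1(M)$ is for, and keeping track of whether the fiber class $c$ is of infinite or finite order --- the dichotomy between the aspherical and monotone cases --- is what determines how much of $H^1_{dR}(M,\Z)$ is already caught smoothly and how much must be detected by this formal-contact argument. Assembling (i) and (ii) then yields the injectivity of $\I$.
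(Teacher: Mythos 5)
Your overall architecture is reasonable in spirit (reduce to showing the explicit lift $L_\alpha$ of the symplectic gradient flow is not contact isotopic to the identity, treat the smoothly detectable classes softly, and fight the smoothly trivial ones with formal contact data), and you correctly locate the crux; but the crux is exactly what is missing, and your sketch of it would not go through as stated. The paper's detection mechanism for the smoothly trivial classes has three concrete ingredients, none of which appears in your proposal. First, a computation in the conformal symplectic frame bundle: Lemma \ref{L_alpha on pi_1 frame} shows that $(L_\alpha)_*$ acts on any loop $\widehat{\delta}$ in $\CSp(\xi)$, up to \emph{free} homotopy, by concatenation with $\Gamma_V^k$, where $\Gamma_V$ is the Reeb frame loop and $k=\langle[\alpha],[\overline{\delta}]\rangle$. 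Second, a Maslov-type index computation (Lemma \ref{index formula}) giving $\Index(\Gamma_V^j)=-\langle c_1(TM),\overline{j}_*[S^2]\rangle$ for a capping disc $j$ of the Reeb orbit; since $\int_{\gamma_V}\lambda=1$ forces the capping sphere to have unit symplectic area, monotonicity with $\kappa\neq 0$ gives $\Index(\Gamma_V^j)=-\kappa\neq0$, and monotonicity is also precisely what makes $\pi_1(\CSp(2n,\R))\to\pi_1(\CSp(\xi))$ injective, so $\Gamma_V$ has infinite order in $\pi_1(\CSp(\xi))$ (Proposition \ref{reeb loop frame inf order}). Third, a group-theoretic step converting free homotopy into a contradiction: a free homotopy $\widehat{\delta}\simeq\widehat{\delta}\ast\Gamma_V^k$ only yields a \emph{conjugation} identity $\widehat{\beta}\ast\widehat{\delta}\ast\widehat{\beta}^{-1}=\widehat{\delta}\ast\Gamma_V^k$, and one kills the conjugation using the rank-at-most-one hypothesis together with Lemma \ref{central extension} (every $\Z$-central extension of an abelian group of rank at most one is abelian). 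Your replacement for all this --- ``the primary obstruction of the loop $\eta^\beta_t$ in $\pi_1(\mathcal{D})$ is pinned down by $\pi^*[\beta]$, and the rank condition guarantees it survives modulo $\pi_1(\Diff_0(V))$'' --- is not a computation: a de Rham class in $H^1(V;\R)$ does not by itself produce an element of $\pi_1$ of the space of hyperplane fields obstructing a free loop, the hypothesis that the monotonicity constant is \emph{nonvanishing} is never used concretely anywhere in your argument (yet without it the statement fails at the level of the relevant index, which vanishes when $c_1(TM)$ kills the unit-area sphere), and the conjugacy/free-homotopy subtlety that the rank condition is actually designed to handle is waved at rather than addressed.

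Two structural points besides the main gap. (a) Your step (i), $\Gamma_\omega=0$, is asserted with only heuristics (``controls $\pi_1(\Symp)$ enough to locate all flux''); no such argument is given or obviously available. It is also unnecessary as an independent step: if $[\beta]\in\Gamma_\omega$ then $\overline{\varphi}_1$ is Hamiltonian and $L_\beta$ is strict-contact isotopic to the identity through lifted Hamiltonian isotopies, so your step (ii), if proved for all nonzero classes, already forces $\Gamma_\omega=0$; in the paper the vanishing of the flux group is a byproduct (Remark \ref{flux group vanishes}), not an input. (b) Your dichotomy --- smooth detection exactly for $[\beta]$ outside the image of the contraction $\bar h\mapsto[\omega](\bar h,\cdot)$ --- is unjustified and does not match how the cases actually split. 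When $[\omega]$ vanishes on $\pi_2(M)$, the fiber class is central of infinite order in $\pi_1(V)$ and Proposition \ref{baby prop} detects \emph{every} nonzero class already in $\pi_0(\Diff(V))$ via the rank condition. When $[\omega]$ is nonzero on $\pi_2(M)$, the fiber class is torsion in $\pi_1(V)$, so the $\pi_1(V)$-twisting you describe can see at most finite-order information, and the frame-bundle argument must carry all of $H^1_{dR}(M,\Z)$, not merely a complement of $\operatorname{im}L$. So as written the proposal identifies the right battlefield but supplies neither the invariant (the Maslov index of the Reeb frame loop) nor the two hypotheses-specific mechanisms (nonvanishing monotonicity constant; triviality of $\Z$-central extensions of rank-one abelian groups) that make the detection work.
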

The idea of the proof is as follows. For a given class $\alpha \in H_{dR}^1(M,\Z)$, the associated strict contactomorphism $L_\alpha$ acts on a loop $\delta$ in the conformally symplectic frame bundle $\CSp(\xi)$, up to free homotopy, by concatenation with an iterate of the \emph{Reeb frame loop}, which is defined by picking a symplectic frame of a contact hyperplane and pushing it forward by the Reeb flow along a circle fiber. The amount  of iteration of the Reeb frame loop is $\langle [\alpha], [\overline{\delta}]\rangle$, where $\overline{\delta}$ is the projection of $\delta$ to $M$. Then, we associate a Maslov-type index to the Reeb frame loop which we use to show that the resulting loop $L_\alpha\circ \delta$ is not freely homotopic to $\delta$ whenever $\langle [\alpha], [\overline{\delta}]\rangle\neq 0$. Using this Maslov-type index, we show the following statement
\begin{proposition}\label{reeb loop frame inf order at intro} Let $(V,\lambda)\to (M,\omega)$ be a prequantization. Suppose that either $[\omega]$ vanishes on $\pi_2(M)$ or $(M,\omega)$ is monotone on $\pi_2(M)$ with non-vanishing monotonicity constant. Then the Reeb frame loop is of infinite order in $\pi_1(\CSp(\xi))$. 
 In particular, the Reeb flow, seen as a loop in $\Cont(V,\xi)$, is of infinite order in $\pi_1(\Cont(V,\xi))$.  
\end{proposition}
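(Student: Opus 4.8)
The plan is to compute the class of the Reeb frame loop directly in $\pi_1(\CSp(\xi))$ by exploiting the fact that, for a prequantization, the contact distribution is a pullback. Since $d\lambda|_\xi=\pi^{*}\omega|_\xi$, the differential $d\pi$ identifies $(\xi,d\lambda|_\xi)$ with the pullback symplectic bundle $\pi^{*}(TM,\omega)$; consequently $\CSp(\xi)\cong\pi^{*}\CSp(TM)$, and the conformal symplectic frame bundle becomes the fiber product $V\times_M\CSp(TM)$, which is itself a fiber bundle over $M$ with fiber $S^1\times\CSp(2n)$. First I would record the geometric observation underlying the whole argument: because the connection form $\lambda$ is Reeb-invariant, the horizontal spaces are permuted equivariantly, so $d\pi\circ d\phi_t=d\pi$ along a circle fiber. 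In the trivialization of $\xi$ over a fiber $C$ provided by $d\pi$, the Reeb frame loop is therefore the loop $\bigl(t\mapsto\phi_t(v_0),\ \text{constant frame}\bigr)$; that is, it represents the \emph{pure fiber class} $(1,0)\in\pi_1(S^1)\oplus\pi_1(\CSp(2n))$, with vanishing component in the structure group.

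Next I would feed this into the homotopy exact sequence of the bundle $S^1\times\CSp(2n)\to\CSp(\xi)\to M$,
\begin{equation*}
\pi_2(M)\xrightarrow{\ \partial\ }\pi_1(S^1)\oplus\pi_1(\CSp(2n))\longrightarrow\pi_1(\CSp(\xi))\longrightarrow\pi_1(M)\longrightarrow 0,
\end{equation*}
noting that $\pi_1(\CSp(2n))\cong\Z$ since $\CSp(2n)$ retracts onto $\U(n)$. Because $\CSp(\xi)$ is a fiber product, the connecting map splits as the sum of the two connecting maps, namely $\partial(A)=\bigl(\langle[\omega],A\rangle,\ \langle c_1(TM),A\rangle\bigr)$: the first coordinate is the Euler number of $V\to M$ evaluated on $A$, and the second is the Maslov/Chern winding, which is exactly the Maslov-type index alluded to in the introduction. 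The Reeb frame loop lies in the kernel $N$ of $\pi_1(\CSp(\xi))\to\pi_1(M)$, an abelian group isomorphic to $\bigl(\pi_1(S^1)\oplus\pi_1(\CSp(2n))\bigr)/\operatorname{im}\partial$, and there it is the image of $(1,0)$.

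To conclude, in both hypotheses I would produce a homomorphism $\ell\colon\pi_1(S^1)\oplus\pi_1(\CSp(2n))\to\R$ that kills $\operatorname{im}\partial$ but satisfies $\ell(1,0)=1$. When $[\omega]$ vanishes on $\pi_2(M)$, take $\ell$ to be the first coordinate, since then $\operatorname{im}\partial\subseteq 0\oplus\Z$; when $(M,\omega)$ is monotone with constant $\tau\neq0$, so that $\langle[\omega],A\rangle=\tau\langle c_1(TM),A\rangle$ for all $A\in\pi_2(M)$, take $\ell=(\text{first})-\tau\cdot(\text{second})$. Either way $\ell$ descends to $N$ and shows that the image of $(1,0)$ has infinite order there, hence the Reeb frame loop has infinite order in $\pi_1(\CSp(\xi))$. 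For the final assertion I would use the evaluation map $\Cont(V,\xi)\to\CSp(\xi)$, $\psi\mapsto d\psi(F_0)$, which is continuous and sends the Reeb-flow loop to the Reeb frame loop; as a continuous map it induces a homomorphism on $\pi_1$, so infinite order of the image forces infinite order of the Reeb flow seen as a loop in $\Cont(V,\xi)$.

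The main obstacle I anticipate is justifying the two structural inputs precisely: first, that $d\pi$ is a genuine \emph{symplectic} trivialization making the Reeb frame loop horizontally constant (this is where Reeb-invariance of $\lambda$ enters), and second, the identification of the connecting homomorphism $\partial$ with the pair of characteristic-class pairings, with correct normalizations --- in particular that the frame-bundle connecting map records $\langle c_1(TM),\cdot\rangle$ under the retraction $\CSp(2n)\simeq\U(n)$. Everything else is formal manipulation of the exact sequence.
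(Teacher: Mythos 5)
Your proposal is correct, and it takes a genuinely different route from the paper's. The paper argues in two separate cases: when $[\omega]$ vanishes on $\pi_2(M)$ it simply projects the Reeb frame loop $\Gamma_V$ to the orbit $\gamma_V$, which has infinite order in $\pi_1(V)$; in the monotone case it caps the (contractible, or after iteration, torsion) orbit by a disc $j$, proves the index formula $\Index(\Gamma_V^j)=-\langle c_1(TM),\overline{j}(S^2)\rangle$ (Lemma \ref{index formula}), computes the capping sphere's symplectic area to be $1$ from $\int_{\gamma_V}\lambda=1$, and combines this with the injectivity of $\pi_1(\CSp(2n,\R))\to\pi_1(\CSp(\xi))$, extracted from the exact sequence of $\CSp(\xi)\to V$; the torsion case of $\gamma_V$ needs a separate pass via $\Gamma_V^d$. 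You instead exploit the prequantization-specific identification $d\pi:(\xi,d\lambda)\to\pi^{*}(TM,\omega)$, and your first anticipated obstacle is in fact immediate: $\pi^{*}\omega=d\lambda$ makes $d\pi|_{\xi_p}$ a linear symplectomorphism, and $\pi\circ\phi^X_t=\pi$ gives $d\pi\circ(\phi^X_t)_*=d\pi$, so in the fiber product $\CSp(\xi)=V\times_M\CSp(TM)$ the Reeb frame loop really is the pure fiber class $(1,0)$. Your second worry, the identification of the connecting map with $\bigl(\pm\langle[\omega],\cdot\rangle,\pm\langle c_1(TM),\cdot\rangle\bigr)$, consists of exactly the two facts the paper itself invokes without proof (for $V\to M$ in the proof of Proposition \ref{baby prop}, and for the frame bundle after \eqref{hom ex seq frame}), applied to the factor fibrations of the fiber product; the sign ambiguities are harmless since they can be absorbed into your functional $\ell$. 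The payoff of your version is a uniform, case-free argument over the base $M$ in a single exact sequence, with no capping discs and no separate torsion discussion: $\Gamma_V$ is the image of $(1,0)$ in $(\Z\oplus\Z)/\operatorname{im}\partial$ and is detected by $\ell$ under either hypothesis. What the paper's route buys instead is the explicit capping-disc index $\Index(\Gamma_V^j)$, its independence criterion (equivalent to monotonicity), and the link to the Casals--Ginzburg--Presas Maslov index, which have independent interest elsewhere in the paper. Your final step, via the evaluation map $\Cont(V,\xi)\to\CSp(\xi)$, coincides with the paper's.
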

We note that the Maslov index that we associate to the Reeb frame loop can be seen as a reformulation of the first Maslov index, which is a homomorphism from $\pi_1(\Cont(V,\xi))$ to $\Z$, of Casals, Ginzburg and Presas \cite{CGP}, applied to the Reeb loop (see also \cite{CP} and \cite{CS}). A similar result was also obtained by Albers, Shelukhin, and Zapolsky, as an application (among other applications) of some spectral invariants which they associate to contactomorphisms of prequantization bundles (see \cite{ASZ} for more details).

\begin{theorem}\label{cor main at intro} Let $(\Sigma_g,\omega_0)$ be a surface of genus $g\geq 2$ with an integral symplectic form and $\omega_{FS}$ be the Fubini-Study form on $\C P^n$, which is normalized so that the standard $\C P^1\subset \C P^n$ has unit area. Consider a prequantization  
$(V,\lambda)\to (\C P^n\times \Sigma_g, \omega_{FS}\oplus \omega_0).$ Then the homomorphism 
$$\mathcal{I}:H^1_{dR}(\Sigma_g,\Z)\to \pi_0(\Cont(V,\xi)),$$ 
is injective. Moreover, $\ker(\J)$  contains the image of $\I$ for $n$ odd, and contains $\I(2H^1_{dR}(\Sigma_g,\Z))$ for $n$ even.
\end{theorem}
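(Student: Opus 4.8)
The plan is to separate the statement into the injectivity of $\I$, which I would deduce directly from Theorem \ref{main thm at intro}, and the claim about $\ker(\J)$, which requires a topological analysis of the underlying diffeomorphisms. For the first part I would verify the three hypotheses of Theorem \ref{main thm at intro} for $M=\C P^n\times\Sigma_g$ with $\omega=\omega_{FS}\oplus\omega_0$. The form is integral by the normalization of $\omega_{FS}$ together with the integrality of $\omega_0$. Since $\pi_2(\Sigma_g)=0$ for $g\geq 2$, we have $\pi_2(M)=\pi_2(\C P^n)=\Z$, generated by a line $\C P^1$, and $\langle[\omega],\C P^1\rangle=1$ while $\langle c_1(TM),\C P^1\rangle=n+1$; thus $(M,\omega)$ is monotone on $\pi_2$ with nonzero constant $\tfrac{1}{n+1}$, and since $[\omega]$ does not vanish on $\pi_2$ this monotone alternative is the relevant one. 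Finally $\pi_1(M)=\pi_1(\Sigma_g)$, and a closed surface group of genus at least two contains no subgroup isomorphic to $\Z^2$, so every abelian subgroup is cyclic. As $H^1(\C P^n)=0$, the Künneth identification $H^1_{dR}(M,\Z)\cong H^1_{dR}(\Sigma_g,\Z)$ intertwines the two maps called $\I$, and Theorem \ref{main thm at intro} gives injectivity.

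For the statement about $\ker(\J)$ I must show that a representative diffeomorphism $L_\alpha$ of $\I(\alpha)$ is smoothly isotopic to the identity under the stated parity conditions. Since $\I(\alpha)$ lies in the kernel of \eqref{scont to symp at intro}, the symplectomorphism it covers lies in $\Symp_0(M,\omega)$; lifting a smooth isotopy of the base to a bundle isotopy of $V$ reduces $L_\alpha$, up to smooth isotopy, to a fiber-rotation gauge transformation $L_u(v)=v\cdot u(\pi(v))$ whose class $[u]\in H^1(M,\Z)=[M,S^1]$ equals $\alpha$, the identification of classes being the content of Theorem \ref{comp. of H} (here $\Gamma_\omega$ is trivial). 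As $\alpha$ comes from $H^1(\Sigma_g,\Z)$, the map $u$ factors as $\bar u\circ\mathrm{pr}_{\Sigma_g}$. The geometric key is then that $V$ is the unit sphere bundle $S(W)$ of $W=L^{\oplus(n+1)}\to\Sigma_g$, where $\deg L=\int_{\Sigma_g}\omega_0$: projectivization is unchanged by tensoring with a line bundle, so $\mathbb{P}(W)\cong\C P^n\times\Sigma_g$, and the tautological circle bundle $S(W)\to\mathbb{P}(W)$ has Euler class $H+(\int_{\Sigma_g}\omega_0)\,\sigma=[\omega]$ with $H,\sigma$ the generators of $H^2(\C P^n)$ and $H^2(\Sigma_g)$, so it is indeed the prequantization $V$. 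Composing $S(W)\to\Sigma_g$ exhibits $V$ as an $S^{2n+1}$-bundle over $\Sigma_g$ whose fibers are Hopf spheres, the prequantization circle action restricts to the Hopf action, and $L_u$ scales the $\C^{n+1}$-fibers of $W$ by $\bar u$.

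It then remains to decide when this scalar gauge transformation is smoothly isotopic to the identity, and I would argue only the one direction the theorem asserts. First I homotope $\bar u$ so that it is constant on a disk $D\subset\Sigma_g$ and concentrates its winding on the $1$-skeleton $\Sigma_g\setminus D$, so that $L_u$ is the identity over $D$. Over the $1$-complex $\Sigma_g\setminus D$ the bundle $W$ is trivial, and the composite $\Sigma_g\setminus D\xrightarrow{\bar u}S^1\to\SO(2n+2)$ given by scalar multiplication has homotopy class $(n+1)\,(\alpha\bmod 2)\in H^1(\Sigma_g;\Z/2)$, since the scalar loop $\theta\mapsto e^{i\theta}\mathrm{Id}$ represents $(n+1)$ times the generator of $\pi_1(\SO(2n+2))=\Z/2$. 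When $n$ is odd this class vanishes for every $\alpha$, and when $n$ is even it vanishes precisely for $\alpha\in 2H^1(\Sigma_g,\Z)$; in either case the composite is null-homotopic in $\SO(2n+2)$ rel a neighborhood of $\partial D$, where $\bar u$ is already constant. Such a null-homotopy is a path of fiberwise-linear orthogonal automorphisms of $W$ restricting to a smooth isotopy of $V$ from $L_u$ to the identity, so $\J(\I(\alpha))$ is trivial in the asserted range.

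The main obstacle, and the step I would be most careful about, is the passage from the abstract class $\I(\alpha)$ to the concrete scalar gauge transformation $L_u$ with the correct cohomology class: one must know both that $L_\alpha$ deforms smoothly to a bundle automorphism covering $\mathrm{id}_M$ and that the resulting gauge class is exactly $\alpha$, which is where Theorem \ref{comp. of H} and the triviality of $\Gamma_\omega$ are essential. A secondary subtlety is that I reduce the smooth-isotopy question to $\pi_1(\SO(2n+2))$ rather than to the far larger $\pi_1(\Diff(S^{2n+1}))$; this is legitimate because I need only the \emph{existence} of an isotopy, which I produce linearly, and because arranging $L_u=\mathrm{id}$ over the $2$-cell sidesteps any obstruction that would live in $\pi_2$ of the fiber diffeomorphism group. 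I would not attempt the converse statement for odd classes when $n$ is even, as it is not claimed.
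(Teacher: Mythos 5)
Your proposal is correct in substance, and for the $\ker(\J)$ half it takes a genuinely different route from the paper's. The injectivity part coincides with the paper: one checks the hypotheses of Theorem \ref{main thm} ($\pi_2(M)=\pi_2(\C P^n)$ generated by a line of unit area, monotonicity, and that abelian subgroups of a genus $\geq 2$ surface group are cyclic). For smooth triviality, the paper first proves the general Proposition \ref{smooth}: for any prequantization over $W\times \Sigma_g$ with $W$ simply connected, if the Reeb circle action on $Z=V|_W$ is a torsion loop of order $k$ in $\pi_1(\Diff_0(Z))$, then $kH^1_{dR}(\Sigma_g,\Z)$ is smoothly trivial. Its proof localizes $\alpha$ as $d\widetilde{f}$ with $\widetilde{f}$ supported on an embedded cylinder $C\subset \Sigma_g$, shows that $V|_C\to C$ is $\S^1$-equivariantly trivial with fiber $Z$ (via horizontal lifts along zero-holonomy loops, using transitivity of $\Ham(W,\omega_W)$), so that the gauge factor $\phi^X_{(\widetilde f\circ p)\circ\pi}$ of \eqref{L_alpha} becomes a degree-$k$ loop of Hopf rotations in $\Diff_0(\S^{2n+1})$; the corollary then only needs that the scalar loop $t\mapsto e^{2\pi i t}\,\Id$ in $\U(n+1)\subset \SO(2n+2)$ represents $(n+1)$ times the generator of $\pi_1(\SO(2n+2))=\Z/2$. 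You instead identify $V$ globally as the unit sphere bundle $S(L^{\oplus(n+1)})\to \Sigma_g$, concentrate the winding of $\bar u$ on the complement of a disc, and produce the isotopy entirely through fiberwise-linear orthogonal bundle automorphisms via a rel-boundary null-homotopy in $\SO(2n+2)$. The homotopy-theoretic core is identical (the class $(n+1)(\alpha\bmod 2)$), but your version is more concrete and bypasses both the equivariant-trivialization lemma and the $\pi_1(\Diff_0(Z))$ formalism, at the cost of the vector-bundle identification of $V$ and of generality: the paper's proposition applies to any simply connected factor $W$ and only requires the Reeb loop to be torsion in $\pi_1(\Diff_0(V|_W))$, a weaker condition than linear contractibility.

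Three small repairs. First, the monotonicity constant is $\kappa=n+1$, not $1/(n+1)$, since the paper's convention is $c_1(TM)=\kappa\,\omega$ on $\pi_2(M)$. Second, with the paper's convention that the Euler class lifts $-[\omega]$ (Theorem \ref{thm preq}), you need $\deg L=-\int_{\Sigma_g}\omega_0$; this only flips an orientation of the circle action and does not affect the isotopy argument. Third, your closing remark about "sidestepping $\pi_2$" is slightly off: arranging $L_u=\Id$ over the disc $D$ does not by itself remove the secondary obstruction, which lives in $H^2(\Sigma_g\setminus D,\partial;\pi_2(\SO(2n+2)))$ for the rel-boundary null-homotopy; it vanishes because $\pi_2$ of a Lie group is zero (and the primary obstruction in $H^1(\Sigma_g\setminus D,\partial;\Z/2)\cong H^1(\Sigma_g\setminus D;\Z/2)$ is exactly your class $(n+1)(\alpha\bmod 2)$). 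With these points made explicit, your argument is complete. Finally, note that the passage from $\I(\alpha)$ to the gauge rotation is immediate from the paper's explicit lift \eqref{L_alpha}: $L_\alpha=\phi^X_{f\circ\pi}\circ\varphi_1$ with $\varphi_1$ a time-one flow map, so $L_\alpha$ is smoothly isotopic to the gauge transformation by $f$ with $df=\alpha$; no appeal to Theorem \ref{comp. of H} is needed at the level of representatives.
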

More generally, we describe a setting for which the composition $\J\circ\I$ fails to be injective, see Proposition \ref{smooth}. In principle, this provides examples other than the ones given in the above theorem.

In what follows we give an account of previous results in the direction of this work. The first result we want to mention is given by Giroux \cite{Gir}, which says that $\J$ is injective for $V=\T^3$ and $\xi$ the standard tight contact structure (See also \cite{GK} Proposition 2.). The first example, according to our knowledge, of a closed contact manifold for which the kernel of $\J$ is non-trivial is attributed to Chekanov, see \cite{EF, Vog}. The statement here is that for any overtwisted contact structure $\xi$ on $\S^3$, $\pi_0(\Cont(\S^3,\xi))$ is either $\Z_2$ or $\Z_2\oplus \Z_2$ depending on the Hopf invariant. Later in \cite{Dy}, Dymara introduces an invariant for overtwisted contact structures on $\S^3$, which enables her to show that a homomorphism from $\pi_0(\Cont(\S^3,\xi))$ to $\Z_2$ that she constructs is surjective for every overtwisted $\xi$.

The first example where the contact structure is tight and also the kernel of $\J$ is infinite is given by Ding and Geiges in \cite{DG}, following an observation made by Gompf in \cite{Gom}. Here the contact manifold is $\S^1\times \S^2$ with the standard tight contact structure. Later in \cite{GM}, Giroux and Massot show that the kernel of $\J$ is $\Z_d$ for $V$ being the $d-$fold fiber cover of the unit cotangent bundle of a suitable surface of genus at least $2$. In \cite{MK}, Massot and Niederkr\"uger construct hypertight contact manifolds in every odd dimension for which the kernel of $\J$ contains non-trivial but finite order elements. Later in \cite{LZ} Lanzat and Zapolsky construct an embedding of certain (Braid) subgroups $G$ of the symplectic mapping class group of the $A_m-$Milnor fiber, which is non-compact and exact, to the contact mapping class group of the associated contactization and show that $G$ is contained in the kernel of $\J$. 

The last two works we want to mention are due to Gironella. In \cite{Gi} he constructs examples of overtwisted contact manifolds, in every odd dimension, admitting smoothly trivial finite order contact mapping classes. Lastly, Gironella in \cite{Gi2}, generalizes the constructions in \cite{DG} to provide examples of tight contact manifolds in all odd dimensions, which admit smoothly trivial contact mapping classes of infinite order. 

\subsection*{Questions and further discussion}\label{sec4} Given a prequantization bundle $(V,\lambda)$ over an integral symplectic manifold $(M,\omega)$, the kernel of the homomorphism \eqref{scont to symp at intro} is naturally identified with $H^1_{dR}(M,\Z)/\Gamma_\omega$, see Theorem \ref{comp. of H}, and that one obtains a homomorphism  $$\I: H^1_{dR}(M,\Z)/\Gamma_\omega\to \pi_0(\Cont(V,\xi)).$$
We do not know any example for which the homomorphism $\I$ fails to be injective. 

One may perhaps find situations where the image of $\I$ contains elements which are  isotopic to the identity in the group of formal contactomorphisms, namely the group of pairs $(\varphi,\Phi)$, where $\varphi\in \Diff(V)$ and $\Phi$ is a bundle map of $TV$ covering $\varphi$ and preserving $\xi$ conformally symplectically. A candidate setting for this is when the Reeb frame loop is contractible, in other words, when $M$ admits an element in $\pi_2(M)$ of unit symplectic area on which $c_1(TM)$ vanishes.

On the other hand, the non-injectivity of $\I$ has the potential to provide examples of strict contactomorphisms that are contact isotopic to the identity but without translated points in the sense of Sandon, see \cite{San}. The setting of such an example could be that there is some $\alpha\in \ker \I$ represented by a non-singular closed $1-$form on $M$ such that the time-one-map of its symplectic gradient flow is without fixed points. Recently, Cant \cite {Can} provided examples of contactomorphisms on standard contact spheres that are contact isotopic to identity yet without translated points. However there is no known example of a strict contactomorphism with these properties. 
\subsection*{Outline of the paper} In Section \ref{sec2}, we discuss generalities about prequantizations, and study the homomorphism \eqref{scont to symp at intro}. In Section \ref{sec3}, we give our main results, namely Theorem \ref{main thm}, Proposition \ref{smooth} and Corollary \ref{cor main}. 
\subsection*{Acknowledgements} We thank Felix Schlenk for initiating this work, for many useful discussions and comments on this manuscript. We thank Alberto Abbondandolo, Stefan Nemirovski and Stefan Suhr for very helpful discussions and comments on this manuscript and also for providing encouragement. We also thank Hansj\" org Geiges for his comments during the presentation of this work in a series of lectures in the Arbeitsgemeinschaft Symplektische Topologie at University of Cologne. We thank the referee for the comments. 

The first author is fully and the second author is partially supported  by the SFB/TRR 191 `Symplectic Structures in Geometry, Algebra and Dynamics', funded by the DFG (Projektnummer 281071066 - TRR 191).
\section{Generalities on strict contactomorphisms of prequantizations}\label{sec2}
In this section we fix our setting, recall some 
generalities on prequantizations and determine the kernel of the homomorphism \eqref{scont to symp at intro}. Most of the results presented in this section can be found in the classical books of Banyaga \cite{Ban} and Souriau \cite{So}. We try to keep our presentation minimal and we refer to \cite{Ban, So} for further details.

Let $(M,\omega)$ be a $2n$-dimensional closed integral symplectic manifold, that is, the value of $[\omega]$ over any integral homology 2-cycle is an integer. Then a classical fact due to Kobayashi provides our main object of study. We fix $\S^1:=\R/\Z$ for the rest of this manuscript.
\begin{theorem}[\cite{Ko, BoW}]\label{thm preq} Let $\pi:V\to M$ be a principal $\S^1-$bundle whose Euler class is a lift of $-[\omega]$. Then there is a contact form $\lambda$ on the total space $V$ such that
\begin{enumerate} [label=\text{(p\arabic*)}]
    \item \label{reeb} The vector field that generates the $\S^1-$action is the Reeb vector field of $\lambda$.
    \item \label{curvature} $\pi^*\omega=d\lambda$.
\end{enumerate}
\end{theorem}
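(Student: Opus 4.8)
The plan is to realize $\lambda$ as a suitably normalized principal connection $1$-form, i.e.\ to carry out the Boothby--Wang construction. First I would invoke the existence of a connection on the principal $\S^1$-bundle $\pi\colon V\to M$: via a partition of unity one builds an $\S^1$-invariant $1$-form $\lambda_0$ on $V$ with $\lambda_0(R)=1$, where $R$ denotes the fundamental vector field of the $\S^1$-action. Invariance together with $\lambda_0(R)=1$ gives $\iota_R\,d\lambda_0=\mathcal L_R\lambda_0-d(\iota_R\lambda_0)=0$, so $d\lambda_0$ is horizontal and invariant and hence descends to a closed $2$-form $\Omega_0$ on $M$ with $\pi^*\Omega_0=d\lambda_0$.

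The next, and conceptually central, step is to identify the de Rham class of $\Omega_0$. By Chern--Weil theory the class $[\Omega_0]\in H^2_{dR}(M)$ is, up to a sign, the image of the Euler class of $V$ in de Rham cohomology; the normalization in the hypothesis, namely that the Euler class lifts $-[\omega]$, is chosen precisely so that $[\Omega_0]=[\omega]$. I would then correct $\Omega_0$ to $\omega$ on the nose: since the space of connection $1$-forms is an affine space modeled on the basic $1$-forms $\pi^*\Omega^1(M)$, adding $\pi^*\beta$ changes the curvature by $\pi^*d\beta$. As $[\omega-\Omega_0]=0$ in $H^2_{dR}(M)$, choose $\beta\in\Omega^1(M)$ with $\omega-\Omega_0=d\beta$ and set $\lambda:=\lambda_0+\pi^*\beta$. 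Then $\lambda$ is again a connection form, so $\lambda(R)=1$ and $\mathcal L_R\lambda=0$, while $d\lambda=d\lambda_0+\pi^*d\beta=\pi^*(\Omega_0+d\beta)=\pi^*\omega$, which is exactly \ref{curvature}.

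It remains to verify \ref{reeb} and that $\lambda$ is contact. Since $R$ is vertical we have $d\pi(R)=0$, whence $\iota_R\,d\lambda=\iota_R\pi^*\omega=\pi^*(\iota_{d\pi(R)}\omega)=0$; together with $\lambda(R)=1$ these are precisely the defining equations of the Reeb vector field, so $R=R_\lambda$ by uniqueness, giving \ref{reeb}. For the contact condition I would compute $(d\lambda)^n=\pi^*(\omega^n)$, which is the pullback of a volume form on $M$ and is therefore nondegenerate on the horizontal hyperplane field $\ker\lambda$, on which $d\pi$ is a fiberwise isomorphism onto $TM$. Combined with $\lambda(R)=1\neq0$ and $\iota_R\,d\lambda=0$, evaluating $\lambda\wedge(d\lambda)^n$ on $R$ together with a horizontal basis shows it is nowhere vanishing, so $\lambda$ is a contact form whose Reeb field is $R$.

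I expect the only genuinely delicate point to be the Chern--Weil identification together with the bookkeeping of the sign and normalization conventions linking the Euler class, the first Chern class, and the de Rham class of the curvature; everything after the class $[\Omega_0]=[\omega]$ is pinned down is routine fiberwise linear algebra. I would also note that the correcting form $\beta$ lives genuinely on $M$ (rather than only $\pi^*\beta$ being available on $V$), which is immediate here since $\omega-\Omega_0$ is already an exact $2$-form on $M$ by the computation of $[\Omega_0]$.
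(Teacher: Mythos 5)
Your argument is correct, and it coincides with the classical Boothby--Wang/Kobayashi construction: the paper does not prove this statement at all, but simply cites it as \cite{Ko, BoW}, so yours is in effect the same proof as the one the paper points to. The one genuinely delicate point you flag is handled correctly: with the normalization $\S^1=\R/\Z$ and $\lambda_0(R)=1$, the curvature $\Omega_0$ represents minus the de Rham image of the Euler class, so the hypothesis that $e(V)$ lifts $-[\omega]$ yields $[\Omega_0]=[\omega]$, and the remaining affine correction by $\pi^*\beta$ and the nondegeneracy check for $\lambda\wedge(d\lambda)^n$ are exactly as in the classical references.
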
 

We call such a bundle $(V,\lambda)$, together with a contact form as above, a \textit{prequantization} over $(M,\omega)$.

Given a prequantization $(V,\lambda)$ over $(M,\omega)$ we want to understand the induced homomorphism 
\begin{equation}\label{Scont to Symp}
 \SCont(V,\lambda)\to \Symp(M,\omega),\;\;\varphi\mapsto \overline{\varphi}   
\end{equation}
where $\SCont(V,\lambda)$ is the group of strict contactomorphisms of $(V,\lambda)$, that is, diffeomorphisms of $V$ that preserve $\lambda$, and $\Symp(M,\omega)$ is the group of symplectomorphisms of $(M,\omega)$. Notice that any strict contactomorphism $\varphi$ of $(V,\lambda)$ is in particular an $\S^1-$equivariant diffeomorphism of $V$ and therefore descends to a diffeomorphism $\overline{\varphi}$ of $M$, which preserves $\omega$ due to \ref{curvature}.
 
Our main objective is to characterize the image and the kernel of the map \eqref{Scont to Symp} and of the induced map
\begin{equation}\label{Scont to Symp on components}
 \pi_0(\SCont(V,\lambda))\to \pi_0(\Symp(M,\omega))   
\end{equation}
on mapping class groups. The surjectivity of the map \eqref{Scont to Symp}, respectively of the map \eqref{Scont to Symp on components}, relies on the answer to the question whether a given $\overline{\varphi}\in \Symp(M,\omega)$ can be lifted (maybe after an isotopy) to a map $\varphi\in\SCont(V,\lambda)$. 

In order to study this lifting problem, we fix the $\S^1-$bundle $\pi:V\to M$ as in Theorem \ref{thm preq} and consider the space $\mathcal{P}$ of all contact forms on $V$ that satisfy \ref{reeb} and \ref{curvature}. Notice that $\mathcal{P}$ is an affine space given by
\begin{equation}\label{P affine}
\mathcal{P}=\Bigl\{\lambda+\beta\,|\,\beta\in\Omega^1_{cl}(V)\;{\rm and}\; \beta(X)=0\Bigr\}=\Bigl\{\lambda+\pi^*\alpha\,|\,\alpha\in\Omega^1_{cl}(M)\Bigr\}
\end{equation}
for any fixed $\lambda\in \mathcal{P}$, where $\Omega^*_{cl}$ denotes the space of closed forms. Let $\Diff_{\S^1}(V)$ be  the group of $\S^1-$equivariant diffeomorphisms of $V$. We consider the action of the subgroups  
\begin{equation}\label{G^omega}
    \Diff_{\S^1}^{\omega}(V)=\bigl\{\varphi\in\Diff_{\S^1}(V)\,|\,\overline{\varphi}^*\omega=\omega\bigr\}
\end{equation}
and
\begin{equation}\label{GId}
    \Diff_{\S^1}^{\Id}(V)=\bigl\{\varphi\in\Diff_{\S^1}(V)\,|\,\overline{\varphi}=\Id\bigr\}.
\end{equation}
on $\mathcal{P}$ by the pull-back.
We note that $\varphi\in \Diff_{\S^1}^{\omega}(V)$ if and only if $\varphi\in \Diff_{\S^1}(V)$ and $\varphi^*(d\lambda)=d\lambda$ for all $\lambda\in \mathcal{P}$ and the stabilizer of any $\lambda\in \mathcal{P}$ in $\Diff_{\S^1}^{\omega}(V)$ is precisely $\SCont(V,\lambda)$. On the other hand we have the identification 
\begin{equation}\label{GId=maps}
\Diff_{\S^1}^{\Id}(V)\cong C^\infty(M,\S^1)      
\end{equation}
since any $\varphi\in\Diff_{\S^1}^{\Id}(V)$ is given by 
\begin{equation}\label{GId as map}
\varphi(p)=\phi^X_{f(\pi(p))}(p)    
\end{equation}
for a unique smooth map $f:M\to \S^1$, where $\phi^X_t$ denotes the flow of the vector field $X$ that generates the $\S^1-$action.

\begin{theorem} \label{surjective on comp}Let $(V,\lambda)$ be a prequantization over $(M,\omega)$ and assume that $H^2(M,\Z)$ is torsion-free. Then the homomorphism \eqref{Scont to Symp on components} is surjective.     
\end{theorem}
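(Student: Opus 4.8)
The plan is to show that every symplectic mapping class $[\psi]\in\pi_0(\Symp(M,\omega))$ admits a representative that lifts to a strict contactomorphism. Given $\psi\in\Symp(M,\omega)$, I would proceed in two stages: first produce an $\S^1$-equivariant diffeomorphism $\varphi_0$ of $V$ covering $\psi$, which in general fails to preserve $\lambda$ only by an error term of the form $\pi^*\alpha$ with $\alpha$ a closed $1$-form on $M$; and then cancel this error, after replacing $\psi$ by an isotopic symplectomorphism, by using the flux of a suitable symplectic isotopy.

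For the first stage, recall that the principal $\S^1$-bundle $\pi\colon V\to M$ is classified by its Euler class $e\in H^2(M,\Z)$, a lift of $-[\omega]$. Since $\psi^*\omega=\omega$ we have $\psi^*[\omega]=[\omega]$ in $H^2(M,\R)$, so $\psi^*e$ and $e$ have the same image in $H^2(M,\R)$; their difference is therefore a torsion class, which vanishes by the hypothesis that $H^2(M,\Z)$ is torsion-free. Hence $\psi^*V\cong V$ as principal $\S^1$-bundles, and composing such an isomorphism with the canonical map $\psi^*V\to V$ yields an $\S^1$-equivariant diffeomorphism $\varphi_0\in\Diff^\omega_{\S^1}(V)$ with $\overline{\varphi_0}=\psi$. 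One checks directly that $\varphi_0^*\lambda$ is again a contact form satisfying \ref{reeb} and \ref{curvature}, hence $\varphi_0^*\lambda=\lambda+\pi^*\alpha$ for a closed $1$-form $\alpha$ on $M$ by the description \eqref{P affine} of $\mathcal{P}$.

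For the second stage, I would realize the class $-[\alpha]\in H^1_{dR}(M,\R)$ as a flux. Choosing the symplectic vector field $Y$ with $\iota_Y\omega=-\alpha$ and letting $\{\rho_t\}_{t\in[0,1]}$ be its flow gives a symplectic isotopy with $\rho_0=\Id$ and $\Flux(\{\rho_t\})=-[\alpha]$. Lifting $Y$ to its horizontal lift on $V$ with respect to the connection $\lambda$ (which is $\S^1$-invariant, so its flow is $\S^1$-equivariant) produces an $\S^1$-equivariant isotopy $\tilde\rho_t$ covering $\rho_t$ with $\tilde\rho_1^*\lambda=\lambda+\pi^*\alpha_{\rho_1}$, and a direct computation of $\tfrac{d}{dt}\tilde\rho_t^*\lambda$ together with the fact that $\rho_t$ acts trivially on $H^1_{dR}(M,\R)$ identifies $[\alpha_{\rho_1}]$ with $\Flux(\{\rho_t\})=-[\alpha]$. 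Now set $\psi'=\psi\circ\rho_1$, which is isotopic to $\psi$ through the symplectic isotopy $\psi\circ\rho_t$, so $[\psi']=[\psi]$. The lift $\varphi'=\varphi_0\circ\tilde\rho_1$ covers $\psi'$ and satisfies $(\varphi')^*\lambda=\lambda+\pi^*(\alpha_{\rho_1}+\rho_1^*\alpha)$, whose error $1$-form has cohomology class $[\alpha_{\rho_1}]+[\rho_1^*\alpha]=-[\alpha]+[\alpha]=0$ and is therefore exact. Finally I would cancel this exact error by composing with a gauge transformation $\varphi_f\in\Diff^{\Id}_{\S^1}(V)$, $f\in C^\infty(M,\S^1)$, using $\varphi_f^*\lambda=\lambda+\pi^*df$ (which follows from \eqref{GId as map}); choosing $f$ with $df$ equal to minus the error form yields $\varphi'\circ\varphi_f\in\SCont(V,\lambda)$ with $\overline{\varphi'\circ\varphi_f}=\psi'$. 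Thus $[\psi]$ lies in the image of \eqref{Scont to Symp on components}.

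I expect the main obstacle to be the first stage: the passage from ``$\psi$ preserves the real cohomology class $[\omega]$'' to ``$\psi$ preserves the integral Euler class $e$'', which is precisely where the torsion-freeness of $H^2(M,\Z)$ enters and without which $\psi$ need not lift even to an $\S^1$-equivariant bundle map. The remaining work is the identification of the residual obstruction with a flux class, which is routine once one verifies that the horizontal lift of a symplectic isotopy changes $\lambda$ by the pullback of $\int_0^1\rho_t^*(\iota_Y\omega)\,dt$ and uses the triviality of the $H^1$-action of isotopies of the identity.
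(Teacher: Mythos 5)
Your proof is correct and takes essentially the same route as the paper: the torsion-freeness argument producing an $\S^1$-equivariant lift $\varphi_0$ with $\varphi_0^*\lambda=\lambda+\pi^*\alpha$ is identical, and your horizontal lift $\tilde\rho_t$ of the symplectic vector field dual to $-\alpha$ is precisely the flow of the paper's vector field $Y$ (defined by $\lambda(Y)=0$ and $\imath_Y d\lambda=-\pi^*\alpha$) used to show that the identity component of $\Diff_{\S^1}^{\omega}(V)$ acts transitively on $\mathcal{P}$. The only (harmless) difference is cosmetic: since $\alpha(Y)=0$ and $d\alpha=0$ give $\rho_t^*\alpha=\alpha$, one actually has $\tilde\rho_1^*\lambda=\lambda-\pi^*\alpha$ exactly rather than merely cohomologically, so your final gauge correction by $\varphi_f$ is unnecessary.
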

\begin{proof} We first show that 
the identity component of $\Diff_{\S^1}^{\omega}(V)$ acts transitively on $\mathcal{P}$. In fact, given the line segment $\lambda_t=\lambda+t\beta$, $t\in [0,1]$ joining $\lambda$ and $\lambda+\beta$,  we define the vector field $Y$ on $V$ by the equations $\lambda(Y)=0$ and $\beta=-\imath_Y\,d\lambda$. Note that $\dot{\lambda}_t=\beta$, $d\lambda_t=d\lambda$ and $\lambda_t(Y)=0$ for all $t\in[0,1]$. Now let $\phi_t$ be the flow of $Y$. We get
$$\frac{d}{dt}\phi_t^*\lambda_t=\phi_t^*\left(\dot{\lambda_t}+\mathcal{L}_Y\lambda_{t}\right)=\phi_t^*\left(\beta+\imath_Y d\lambda_{t}+d(\lambda_{t}(Y))\right)=
      \phi_t^*\left(\beta+\imath_Y d\lambda\right)=0.
$$
Hence $\phi_t^*(\lambda+t\beta)=\lambda$ and $\phi_t\in \Diff_{\S^1}^{\omega}(V)$ for all $t\in[0,1]$. In fact $[X,Y]=0$ since 
$$0=\lambda([X,Y])=X(\lambda(Y))-Y(\lambda(X))-d\lambda(X,Y)$$
and 
$$0=\imath_{[X,Y]}d\lambda=\mathcal{L}_X\imath_Yd\lambda-\imath_Y\mathcal{L}_Xd\lambda=-\mathcal{L}_X\beta=0.$$
Note that any $\varphi\in \Symp(M,\omega)$ lifts to some $\widetilde{\varphi}\in \Diff_{\S^1}^\omega(V)$ since $\varphi$ pulls back the circle bundle $V\to M$ to a circle bundle on $M$, which has the same Euler class since $H^2(M,\Z)$ is torsion-free. The resulting bundle equivalence provides $\widetilde{\varphi}$. Then there is some  $\varphi$ in the identity component of $\Diff_{\S^1}^{\omega}(V)$ such that $\psi^*(\widetilde{\varphi}^*\lambda)=\lambda$. So $\widetilde{\varphi}\circ \psi\in \SCont(V,\lambda)$ and $\overline{\widetilde{\varphi}\circ\psi}$ is isotopic to $\varphi$ in $\Symp(M,\omega)$.
\end{proof}

\begin{lemma}\label{kernel} Let $\lambda\in \mathcal{P}$ and $\varphi\in\SCont(M,\lambda)$ such that $\overline{\varphi}=\Id$. Then $\varphi=\phi_t^X$ for some $t\in \S^1$.    
\end{lemma}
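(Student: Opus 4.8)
The plan is to reduce the statement to the assertion that the smooth map $f\colon M\to\S^1$ attached to $\varphi$ is constant, and then to extract this constancy directly from the strictness condition $\varphi^*\lambda=\lambda$. First I would observe that, since $\varphi$ preserves $\lambda$, it also preserves $d\lambda$ and hence the Reeb vector field $X$ (which is intrinsically determined by $\lambda$ through $\lambda(X)=1$ and $\imath_X d\lambda=0$); consequently $\varphi$ commutes with the Reeb flow $\phi^X_t$, that is, with the $\S^1$-action, so $\varphi$ is $\S^1$-equivariant. Together with the hypothesis $\overline{\varphi}=\Id$, this places $\varphi$ in $\Diff_{\S^1}^{\Id}(V)$, and by the identification \eqref{GId=maps} together with the formula \eqref{GId as map} there is a unique smooth $f\colon M\to\S^1$ with $\varphi(p)=\phi^X_{f(\pi(p))}(p)$. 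Thus the lemma amounts to showing that $f$ is constant.

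The main computation is to evaluate $\varphi^*\lambda$. Writing $g:=f\circ\pi\colon V\to\S^1$, I would differentiate the evaluation map $p\mapsto\phi^X_{g(p)}(p)$ by splitting the derivative into the contribution at fixed time $g(p)$ and the contribution coming from the variation of the time parameter. For $v\in T_pV$ this yields
$$d\varphi_p(v)=d\bigl(\phi^X_{g(p)}\bigr)_p(v)+dg_p(v)\,X\bigl(\varphi(p)\bigr).$$
Pairing with $\lambda$ and using the two defining features of the Reeb flow, namely $(\phi^X_t)^*\lambda=\lambda$ for each fixed $t$ and $\lambda(X)\equiv 1$, the first term contributes $\lambda_p(v)$ and the second contributes $dg_p(v)$. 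Hence $\varphi^*\lambda=\lambda+dg=\lambda+\pi^*df$, which is exactly the shift of $\lambda$ inside the affine space $\mathcal{P}$ described in \eqref{P affine}.

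Finally, the strictness condition $\varphi^*\lambda=\lambda$ forces $\pi^*df=0$; since $\pi$ is a submersion, $\pi^*$ is injective on forms, so $df=0$. As $M$ is connected, $f$ is a constant $t\in\S^1$, and then $\varphi=\phi^X_t$, as claimed. The only non-formal point is the differentiation of the time-dependent evaluation map $p\mapsto\phi^X_{g(p)}(p)$ and the careful bookkeeping of its two terms; everything else follows at once from $\mathcal{L}_X\lambda=0$ and $\lambda(X)=1$.
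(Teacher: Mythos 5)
Your proposal is correct and follows essentially the same route as the paper: it reduces to the formula $\varphi^*\lambda=\lambda+\pi^*df$ of \eqref{GId on P} and concludes that $f$ is constant. The only differences are that you spell out the ``easy computation'' behind \eqref{GId on P} by differentiating the evaluation map $p\mapsto\phi^X_{g(p)}(p)$, and you make explicit the $\S^1$-equivariance of strict contactomorphisms, which the paper records just before the lemma.
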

\begin{proof} Given $\varphi\in \Diff_{\S^1}^{\Id}(V)$ as in \eqref{GId as map}, an easy computation shows that 
\begin{equation}\label{GId on P}
\varphi^*\lambda=\lambda+\pi^*df    
\end{equation}
where $df$ is the differential of $f:M\to\S^1$ viewed as the 1-form $f^*dt$. Now if $\varphi$ preserves $\lambda$, then $f$ is a constant map and therefore $\varphi=\phi^X_t$ for some $t\in\S^1$.
\end{proof}
\begin{theorem}\label{surjective}Let $(V,\lambda)$ be a prequantization over $(M,\omega)$ and assume that $H^2(M,\Z)$ is torsion-free. Then the following are equivalent. 
\begin{enumerate}
    \item The action of $\Diff_{\S^1}^{\Id}(V)$ on $\mathcal{P}$ is transitive.
    \item The first cohomology group $H_{dR}^1(M,\R)$ is trivial.
    \item The homomorphism \eqref{Scont to Symp} is surjective. In other words, the sequence 
    $$0\to\S^1\to \SCont(V,\lambda)\to \Symp(M,\omega)\to 0$$
    is exact.
\end{enumerate}    
\end{theorem}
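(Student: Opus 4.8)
The plan is to reduce all three statements to a single computation: the description of the orbit of $\lambda$ under the action of $\Diff_{\S^1}^{\Id}(V)$ on the affine space $\mathcal{P}$ from \eqref{P affine}. By \eqref{GId on P}, an element $\varphi\leftrightarrow f\in C^\infty(M,\S^1)$ acts on $\mathcal{P}$ by the translation $\mu\mapsto\mu+\pi^*df$, so the orbit of $\lambda$ is exactly $\{\lambda+\pi^*\beta : \beta\in\Omega^1_{cl}(M),\ [\beta]\in H^1_{dR}(M,\Z)\}$, where I use the classical fact that a closed $1$-form on $M$ equals $df$ for some $f:M\to\S^1$ precisely when all of its periods are integers. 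With this orbit description in hand, I would establish the cycle by proving the two equivalences (1)$\Leftrightarrow$(2) and (2)$\Leftrightarrow$(3).

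For (1)$\Leftrightarrow$(2): transitivity of the $\Diff_{\S^1}^{\Id}(V)$-action means the orbit above is all of $\mathcal{P}$, that is (since $\pi^*$ is injective on forms) every closed $1$-form on $M$ has integer periods. Scaling a closed form by an arbitrary $t\in\R$ and demanding integrality of every period forces all periods to vanish, so every closed $1$-form is exact and $H^1_{dR}(M,\R)=0$. Conversely, if $H^1_{dR}(M,\R)=0$ then each closed $\alpha$ equals $dg$ for some $g:M\to\R$, hence $\alpha=df$ with $f=g\bmod\Z$, which gives transitivity.

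For (2)$\Rightarrow$(3): given $\psi\in\Symp(M,\omega)$, torsion-freeness of $H^2(M,\Z)$ lets me lift it to some $\widetilde\psi\in\Diff_{\S^1}^\omega(V)$ exactly as in the proof of Theorem \ref{surjective on comp}, so that $\widetilde\psi^*\lambda=\lambda+\pi^*\alpha\in\mathcal{P}$. Since (2) is equivalent to the transitivity in (1), there is $\eta\in\Diff_{\S^1}^{\Id}(V)$ with $\eta^*(\widetilde\psi^*\lambda)=\lambda$; then $\widetilde\psi\circ\eta\in\SCont(V,\lambda)$ covers $\psi$, which is the surjectivity of \eqref{Scont to Symp}. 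Exactness of the displayed sequence is then automatic: the kernel of \eqref{Scont to Symp} equals $\{\phi^X_t : t\in\S^1\}$ by Lemma \ref{kernel}, and this copy of $\S^1$ injects because the principal action is free.

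The heart of the argument is (3)$\Rightarrow$(2), which I would prove by contraposition, and this is the step I expect to require the most care. Assuming $H^1_{dR}(M,\R)\neq0$, I pick a closed $1$-form $\alpha$ with a non-integral period and let $Y$ be the symplectic vector field determined by $\imath_Y\omega=\alpha$. Since $\alpha$ is closed one checks $\mathcal{L}_Y\alpha=0$, so $\alpha$ is invariant under the flow $\phi^Y_t$. I lift $Y$ to its horizontal lift $Y^h$ on $V$, characterized by $\lambda(Y^h)=0$ and $d\pi(Y^h)=Y$, for which $\mathcal{L}_{Y^h}\lambda=\pi^*\alpha$; letting $\Psi_t$ denote its flow, the invariance of $\alpha$ yields $\tfrac{d}{dt}\Psi_t^*\lambda=\pi^*\alpha$ and hence $\Psi_1^*\lambda=\lambda+\pi^*\alpha$. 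Now any lift of $\psi:=\phi^Y_1$ to $\Diff_{\S^1}(V)$ differs from $\Psi_1$ by an element of $\Diff_{\S^1}^{\Id}(V)$, so by the orbit description such a lift preserves $\lambda$ only if $\alpha$ has integer periods, contradicting the choice of $\alpha$. Thus $\psi$ admits no strict-contact lift and \eqref{Scont to Symp} fails to be surjective. The delicate point is precisely this flux-type computation: confirming that the horizontal lift produces exactly the translation by $\pi^*\alpha$, with $[\alpha]$ genuinely non-integral, is what ties the failure of surjectivity to the nontriviality of $H^1_{dR}(M,\R)$.
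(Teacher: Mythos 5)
Your proof is correct, and for two of the three implications it coincides with the paper's: the orbit description of the $\Diff_{\S^1}^{\Id}(V)$-action via \eqref{GId on P} giving (1)$\Leftrightarrow$(2) is exactly the paper's identification $\mathcal{P}/\Diff_{\S^1}^{\Id}(V)\simeq H^1_{dR}(M,\R)/H^1_{dR}(M,\Z)$ (you merely make explicit the scaling argument showing this quotient is trivial iff $H^1_{dR}(M,\R)=0$), and your (2)$\Rightarrow$(3) is verbatim the paper's (1)$\Rightarrow$(3). Where you genuinely diverge is the remaining implication: the paper closes the cycle with a direct proof of (3)$\Rightarrow$(1) --- given $\lambda'\in\mathcal{P}$, it reuses the Moser-type transitivity of the identity component of $\Diff^{\omega}_{\S^1}(V)$ on $\mathcal{P}$ from Theorem \ref{surjective on comp} to find $\varphi$ with $\varphi^*\lambda'=\lambda$, lifts $\overline{\varphi}$ to $\widetilde{\varphi}\in\SCont(V,\lambda)$ by hypothesis, and observes $\varphi\circ\widetilde{\varphi}^{-1}\in\Diff_{\S^1}^{\Id}(V)$ carries $\lambda'$ to $\lambda$ --- whereas you prove (3)$\Rightarrow$(2) by contraposition, exhibiting an explicit symplectomorphism with no strict lift, namely the time-one map of the symplectic gradient of a closed $1$-form $\alpha$ with non-integral period. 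Your computation $\Psi_1^*\lambda=\lambda+\pi^*\alpha$ is exactly the mechanism the paper isolates later as \eqref{pullback by hor. flow} in the construction of $L_\alpha$, and your non-liftability criterion is a special case of Kostant's Theorem \ref{kostant} combined with Lemma \ref{flux=hol}; so your route is more constructive and anticipates the paper's Section \ref{sec3}, at the price of being slightly longer, while the paper's is shorter because it recycles the transitivity statement wholesale. One small point you should make explicit: to conclude that a strict contact lift of $\phi^Y_1$ differs from $\Psi_1$ by an element of $\Diff_{\S^1}^{\Id}(V)$, you need $\Psi_1$ itself to be $\S^1$-equivariant, i.e.\ $[X,Y^h]=0$; this holds by the same two-line computation ($\lambda([X,Y^h])=0$ and $\imath_{[X,Y^h]}d\lambda=-\mathcal{L}_X\pi^*\alpha=0$, whence $[X,Y^h]=0$ since $\lambda$ is contact) that appears in the paper's proof of Theorem \ref{surjective on comp}, and the equivariance of the hypothetical strict lift is automatic since strict contactomorphisms preserve the Reeb flow.
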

\begin{proof} We recall that every class in $H_{dR}^1(M,\Z)$ is represented by the differential of a circle valued function. Combining this fact with \eqref{GId on P} gives the non-canonical identification 
$$\mathcal{P}/\Diff_{\S^1}^{\Id}(V)\simeq H_{dR}^1(M,\R)/H_{dR}^1(M,\Z),$$
which shows the equivalence of (1) and (2). 

Now assume that $\Diff_{\S^1}^{\Id}(V)$ acts transitively on $\mathcal{P}$ and let $\varphi\in  \Symp(M,\omega)$. Then $\varphi$ lifts to some $\widetilde{\varphi}\in \Diff_{\S^1}^\omega(V)$ as in the proof of Theorem \ref{surjective on comp}. We have some $\psi\in \Diff_{\S^1}^{\Id}(V)$ such that $\psi^*(\widetilde{\varphi}^*\lambda)=\lambda$. Hence $\widetilde{\varphi}\circ\psi$ is in $\SCont (V,\lambda)$ and lifts $\varphi$. Together with Lemma \ref{kernel} we get the exact sequence above.
For the converse statement, we take $\lambda'\in \mathcal{P}$ and by the proof of Theorem \ref{surjective on comp}, there is some $\varphi$ in the identity component of $\Diff_{\S^1}^{\omega}(V)$ such that $\varphi^*\lambda'=\lambda$ and $\overline{\varphi}\in \Symp(M,\omega)$. By assumption there is a lift $\widetilde{\varphi}\in\SCont(V,\lambda)$ of $\overline{\varphi}$. Then we get $\varphi\circ\widetilde{\varphi}^{-1}\in \Diff_{\S^1}^{\Id}(V)$ and $(\varphi\circ\widetilde{\varphi}^{-1})^*\lambda'=\lambda$. 
\end{proof}
\begin{remark} The assumption that $H^2(M,\Z)$ is torsion-free is essential for  Theorems \ref{surjective on comp} and  \ref{surjective}. It is possible to construct an integral symplectic manifold $(M,\omega)$ with non-trivial torsion in $H^2(M,\Z)$ and a symplectomorphism that does not preserve a class $e\in H^2(M,\Z)$ that lifts $-[\omega]$. Such a symplectomorphism can not be lifted even to an $\S^1$-equivariant map of the circle bundle with Euler class $e$. 
\end{remark}

Given a prequantization  $(V,\lambda)\to (M,\omega)$, we now want to understand the kernel of the map \eqref{Scont to Symp on components}. To this end we define the following subgroups 
\begin{equation}\label{L}
    \LL:=\bigl\{\varphi\in\Symp(M,\omega)\,|\,\varphi \textrm{ lifts to }\SCont(V,\lambda)\bigr\}
\end{equation}
and 
\begin{equation}\label{H}
    \HH:=\LL\cap\Symp^0(M,\omega)
\end{equation}
where $\Symp^0(M,\omega)$ is the identity component. 

By Lemma \ref{kernel}, the map \eqref{Scont to Symp} now fits into the exact sequence 
\begin{equation}\label{exact seq for scont}
    0\to\S^1\to \SCont(V,\lambda)\to\LL\to 0.
\end{equation}
which defines a Serre fibration. In fact, it is easy to see that once restricted to the identity components, the above sequence reads 
$$0\to\S^1\to \SCont^0(V,\lambda)\to\Ham(M,\omega)\to 0$$
where $\Ham(M,\omega)$ is the group of Hamiltonian diffeomorphisms. Then the homotopy lifting property for $0-$cells follows from the non-trivial fact that any path of Hamiltonian diffeomorphism starting at $\Id$ is a Hamiltonian isotopy. For the details, we refer to \cite{BW}.     

An immediate consequence of the above lemma is the following exact sequence of mapping class groups. 
\begin{equation}\label{comp of scont}
    0\to \pi_0(\SCont(V,\lambda))\to\pi_0(\LL)\to 0.
\end{equation}
On the other hand if $H^2(M,\Z)$ is torsion-free we have the exact sequence
\begin{equation}
    0\to \pi_0(\HH)\to\pi_0(\LL)\to\pi_0(\Symp(M,\omega))\to 0.
\end{equation}

It turns out that the elements of $\LL$ are characterized by their effect on holonomy and this effect is explicitly determined for the elements of $\HH$. Let $\gamma:[0,1]\to M$ be a piece-wise smooth curve and $p\in \pi^{-1}(\gamma(0))$, then there exists a unique curve $\widehat{\gamma}:[0,1]\to V$ such that  $$\widehat{\gamma}(0)=p, \ \ \pi\circ \widehat{\gamma}=\gamma\ \ \text{and}\ \ \lambda(\dot{\widehat{\gamma}})=0.$$
We call $\widehat{\gamma}$, suppressing its initial condition, \textit{the horizontal lift} of $\gamma$. If $\gamma$ is a loop, that is $\gamma(0)=\gamma(1)$, then the \textit{holonomy} $\Hol(\gamma)\in\S^1$ of $\gamma$ is defined by 
\begin{equation} \phi^X_{\Hol(\gamma)}(\widehat{\gamma}(1))=\widehat{\gamma}(0).    
\end{equation}
We note that the holonomy does not depend on $\widetilde{\gamma}(0)$ and  it is invariant under positive reparameterizations of $\gamma$. Moreover,  $\Hol(\gamma^{-1})=-\Hol(\gamma)$ where $\gamma^{-1}$ some (hence any) negative reparametrization of $\gamma$.  
Now given a diffeomorphism $\varphi$ of $M$ one considers its effect on holonomy, namely the map 
\begin{equation}\label{holonomy effect}
E_\varphi: \Loop(M)\to \S^1,\ \ \gamma\mapsto \Hol(\varphi\circ\gamma)-\Hol(\gamma) \end{equation}
where $\Loop(M)$ is the space of piece-wise smooth maps from $\S^1$ to $M$. The following fact, which we spell out in accordance with our purposes, is due to Kostant  and says that the holonomy effect determines whether a symplectomorphism $\varphi$ lifts to $\SCont(V,\lambda)$. 
\begin{theorem}[Kostant, \cite{Kos} Proposition 3.3]\label{kostant} A symplectomorphism $\varphi\in \Symp(M,\omega)$ admits a lift to $\SCont(V,\lambda)$ if and only if $E_{\varphi}=0$. 
\end{theorem}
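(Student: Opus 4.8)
The plan is to prove both implications by tracking how horizontal lifts transform under $\S^1$-equivariant maps covering $\varphi$. For the necessity, I would suppose $\varphi$ admits a lift $\widetilde{\varphi}\in\SCont(V,\lambda)$. Since $\widetilde{\varphi}$ preserves $\lambda$, it preserves the horizontal distribution $\ker\lambda$, and being $\S^1$-equivariant it commutes with the Reeb flow $\phi^X_t$. Hence for any loop $\gamma$ in $M$ the curve $\widetilde{\varphi}\circ\widehat{\gamma}$ is exactly the horizontal lift of $\varphi\circ\gamma$ starting at $\widetilde{\varphi}(\widehat{\gamma}(0))$. Applying $\widetilde{\varphi}$ to the defining relation $\phi^X_{\Hol(\gamma)}(\widehat{\gamma}(1))=\widehat{\gamma}(0)$ and using equivariance yields $\phi^X_{\Hol(\gamma)}(\widetilde{\varphi}(\widehat{\gamma}(1)))=\widetilde{\varphi}(\widehat{\gamma}(0))$, which upon comparison with the defining relation for $\Hol(\varphi\circ\gamma)$ and by freeness of the action forces $\Hol(\varphi\circ\gamma)=\Hol(\gamma)$. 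Thus $E_\varphi=0$.

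For the sufficiency, assume $E_\varphi=0$, take $M$ connected (treating components separately otherwise), and construct the lift explicitly from holonomy data. Fix $x_0\in M$, a point $p_0\in\pi^{-1}(x_0)$, and a choice $q_0\in\pi^{-1}(\varphi(x_0))$. Given $p\in V$, choose a piecewise smooth path $\gamma$ from $x_0$ to $\pi(p)$, let $\widehat{\gamma}$ be its horizontal lift starting at $p_0$, write $p=\phi^X_c(\widehat{\gamma}(1))$ for the unique $c\in\S^1$, and set $\widetilde{\varphi}(p):=\phi^X_c(\eta(1))$, where $\eta$ is the horizontal lift of $\varphi\circ\gamma$ starting at $q_0$. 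By construction $\pi\circ\widetilde{\varphi}=\varphi\circ\pi$.

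The crucial point, which I expect to be the main obstacle, is that $\widetilde{\varphi}$ is well defined, independently of the chosen path $\gamma$. Two such paths $\gamma_1,\gamma_2$ differ by a loop $\ell$ at $x_0$, and the endpoints of their two horizontal lifts over $\pi(p)$ differ by $\phi^X$ at time $\Hol(\ell)$; likewise the lifts of $\varphi\circ\gamma_1$ and $\varphi\circ\gamma_2$ from $q_0$ differ by $\Hol(\varphi\circ\ell)$. Since $E_\varphi(\ell)=0$ these two time shifts coincide, so the value $\widetilde{\varphi}(p)$ is unchanged. I would then check that $\widetilde{\varphi}$ is $\S^1$-equivariant (moving $p$ by $\phi^X_s$ shifts $c$ by $s$), and that it is a diffeomorphism whose inverse is produced by the same recipe applied to $\varphi^{-1}$, which also satisfies $E_{\varphi^{-1}}=0$. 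Smoothness is the remaining technical hurdle: it follows by expressing $\widetilde{\varphi}$ in local trivializations, where horizontal transport and holonomy depend smoothly on the base point, so I would carry out this verification locally.

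Finally, to see $\widetilde{\varphi}\in\SCont(V,\lambda)$, I note that $\widetilde{\varphi}$ sends horizontal curves to horizontal curves, so $\widetilde{\varphi}_*(\ker\lambda)=\ker\lambda$ and $\widetilde{\varphi}^*\lambda$ annihilates $\ker\lambda$; moreover $\S^1$-equivariance gives $\widetilde{\varphi}_*X=X$, whence $(\widetilde{\varphi}^*\lambda)(X)=\lambda(X)=1$. Since $\ker\lambda$ together with $X$ spans $TV$ and $\widetilde{\varphi}^*\lambda$ agrees with $\lambda$ on both, I conclude $\widetilde{\varphi}^*\lambda=\lambda$, producing the desired strict contactomorphism lifting $\varphi$.
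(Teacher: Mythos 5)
Your proof is correct and follows essentially the same route as the paper: both construct the lift pointwise by horizontal transport relative to a fixed reference fiber (the paper's fiber map $\eta:\pi^{-1}(x)\to\pi^{-1}(y)$ playing the role of your choice of $q_0$ together with the shift parameter $c$), with well-definedness of the construction being precisely the condition $E_\varphi=0$, and with the necessity direction following from the fact that a strict contactomorphism is $\S^1$-equivariant and maps horizontal lifts to horizontal lifts. The only minor variation is the final step, where the paper invokes the hypothesis that $\varphi$ is symplectic to conclude $\widetilde{\varphi}\in\SCont(V,\lambda)$, whereas you argue directly that $\lambda$ is determined by $\ker\lambda$ and by $\lambda(X)=1$, both of which your lift preserves.
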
 
\begin{proof} We fix a point $x\in M$ and put $y=\varphi(x)$. After trivializing the circle bundle near $x$ and $y$, we fix a map $\eta:\pi^{-1}(x)\to \pi^{-1}(y)$, which reads as a rotation with respect to any other choice of such trivializations. Now given $p\in V$  we take a path $\gamma$ such that $\gamma(0)=\pi(p)$ and $\gamma(1)=x$ and we define $\widetilde{\varphi}(p)$ by the equation 
$$\widehat{\varphi\circ \gamma}(1)=\eta(\widehat{\gamma}(1))$$
where $\widehat{\varphi\circ \gamma}$ is the horizontal lift of $\varphi\circ \gamma$ with $\widehat{\varphi\circ \gamma}(0)=\widetilde{\varphi}(p)$ and $\widehat{\gamma}$ is the horizontal lift of $\gamma$ with $\widehat{\gamma}(0)=p$. 

Note that $\widetilde{\varphi}$ is well-defined if and only if $E_\varphi=0$. Moreover $\widetilde{\varphi}$, whenever is well-defined, is an $\S^1-$equivariant lift of  $\varphi$, which maps horizontal lifts of curves in $M$ to horizontal lifts of their images under $\varphi$. It is not difficult to show that this property together with the fact that $\varphi$ is symplectic implies that $\widetilde{\varphi}$ is in $\SCont(V,\lambda)$.    
\end{proof}
Given $\varphi\in \Symp^0(M,\omega)$, it is easy to understand its effect on  holonomy. We begin with the following observation. 
\begin{lemma}\label{hol}
    Let $\gamma_1, \gamma_2, \ldots, \gamma_k:\S^1\to M$ be a family of smooth loops. Suppose there is an oriented surface $\Sigma$ with boundary $\partial \Sigma=S_1 \cup S_2 \cup \ldots \cup S_k$ consisting of $k$ circles and a smooth map $\sigma:\Sigma\to M$ extending each $\gamma_j$, that is, for each $j$, once $S_j$ is oriented as the boundary of $\Sigma$ and identified with $\S^1$,   $\sigma|_{S_j}=\gamma_j$. Then we have  $$\sum_{j=1}^{k}\Hol(\gamma_j)=\int_{\Sigma}\sigma^{*}\omega \mod \Z$$
\end{lemma}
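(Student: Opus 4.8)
The plan is to read $\lambda$ as a principal connection $1$-form on $\pi:V\to M$ and to recover the total holonomy from its curvature $d\lambda=\pi^*\omega$ by an application of Stokes' theorem, after lifting the entire surface $\sigma$ into $V$. First I would produce a global smooth lift $\widetilde{\sigma}:\Sigma\to V$ with $\pi\circ\widetilde{\sigma}=\sigma$. This is the same as a section of the pullback bundle $\sigma^*V\to\Sigma$, and such a section exists because a compact oriented surface with nonempty boundary is homotopy equivalent to a $1$-complex, so $H^2(\Sigma;\Z)=0$ and every principal $\S^1$-bundle over $\Sigma$ is trivial. Since $d\lambda=\pi^*\omega$, we get $d(\widetilde{\sigma}^*\lambda)=\widetilde{\sigma}^*\pi^*\omega=\sigma^*\omega$, whence Stokes' theorem gives
\[
\int_\Sigma\sigma^*\omega=\int_{\partial\Sigma}\widetilde{\sigma}^*\lambda=\sum_{j=1}^k\int_{S_j}\widetilde{\sigma}^*\lambda ,
\]
where each $S_j$ carries the boundary orientation under which $\widetilde{\sigma}|_{S_j}$ is a smooth loop in $V$ covering $\gamma_j$.

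The main step, and the one demanding care, is to identify $\int_{S_j}\widetilde{\sigma}^*\lambda$ with $\Hol(\gamma_j)$ modulo $\Z$. I would compare the loop $\mu:=\widetilde{\sigma}|_{S_j}$ with the horizontal lift $\widehat{\gamma}_j$ starting at $\mu(0)$: since both cover $\gamma_j$ and start at the same point, there is a unique $g:[0,1]\to\R$ with $g(0)=0$ and $\mu(t)=\phi^X_{g(t)}(\widehat{\gamma}_j(t))$. Differentiating and using that the Reeb flow preserves $\lambda$ (so that $\lambda$ annihilates the pushforward of the horizontal velocity $\dot{\widehat{\gamma}}_j$, while $\lambda(X)=1$ contributes exactly $\dot{g}(t)$), I obtain $\lambda(\dot{\mu}(t))=\dot{g}(t)$ and hence $\int_{S_j}\widetilde{\sigma}^*\lambda=g(1)$. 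The closing-up condition $\mu(1)=\mu(0)$ together with the defining relation $\phi^X_{\Hol(\gamma_j)}(\widehat{\gamma}_j(1))=\widehat{\gamma}_j(0)$ then forces $g(1)\equiv\Hol(\gamma_j)\pmod{\Z}$. Summing over $j$ and combining with the displayed Stokes computation yields the claim.

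The only delicate points are the integer ambiguity, which is precisely why the identity can hold only modulo $\Z$, and the bookkeeping of orientations so that Stokes' theorem and the defining sign of the holonomy match. I also note that the mod-$\Z$ statement is robust under the choices above: if $\Sigma$ happened to contain a closed component, it would contribute $0$ to the holonomy sum and an integer to $\int_\Sigma\sigma^*\omega$ by integrality of $[\omega]$, so it would not affect the reduction modulo $\Z$. The geometric inputs, namely triviality of the pullback bundle over a surface with boundary and the invariance of $\lambda$ under the Reeb flow, are otherwise elementary, so I expect no substantial obstacle beyond the sign and integrality accounting.
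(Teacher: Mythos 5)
Your proof is correct and takes essentially the same route as the paper: lift $\sigma$ globally to $\widetilde{\sigma}:\Sigma\to V$ using the graph homotopy type of a surface with boundary, apply Stokes' theorem with $d\lambda=\pi^*\omega$, and identify $\int_{S_j}\widetilde{\sigma}^*\lambda$ with $\Hol(\gamma_j)$ modulo $\Z$. The only cosmetic difference is in that last identification, where the paper runs a small Stokes argument on the strip $\Delta=\{0\le t\le\tau(s)\}$ swept out by the Reeb flow while you differentiate the relation $\mu(t)=\phi^X_{g(t)}(\widehat{\gamma}_j(t))$ directly; both computations are the same in content, and your closing remark on closed components is a harmless (indeed slightly more careful) addition.
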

\begin{proof} We first note that for any smooth $\gamma:\S^1\to M$ 
$$\Hol(\gamma)=\int_{\widetilde{\gamma}}\lambda \mod \Z$$
where $\widetilde{\gamma}$ is any loop that lifts $\gamma$. In fact given a lift $\widetilde{\gamma}:\S^1\to V$, we consider the horizontal lift $\widehat{\gamma}$ of $\gamma$ with $\widehat{\gamma}(0)=\widetilde{\gamma}(0)$ and define a smooth
map $\tau:[0,1]\to \S^1$ such that  
$$\phi^X_{\tau(s)}(\widehat{\gamma}(s))=\widetilde{\gamma}(s).$$
Note that $\tau(1)=\Hol(\gamma)$ modulo $\Z$. We lift $\tau$ to a map into $\R$, still denoted by $\tau$, such that $\tau(0)=0$ and consider the smooth map $$\eta:\Delta\to V,\ (s,t)\mapsto \phi^X_{t}(\widehat{\gamma}(s))$$ 
on the domain
$\Delta:=\bigl\{(s,t)\in \R^2\ | 0\leq t\leq \tau(s),\ s\in [0,1]\bigr\}$.
Note that $\eta^*d\lambda=0$. By Stokes' theorem
$$0=\int_\Delta \eta^*d\lambda= \int_{\widehat{\gamma}}\lambda+\tau(1)-\int_{\widetilde{\gamma}}\lambda$$
and the claim follows.

Now let $\sigma:\Sigma\to M$ be as in the statement. Since $\Sigma$ has the homotopy type of a graph, there exists a lift $\widetilde{\sigma}:\Sigma\to V$ of $\varphi$. By the above observation we get
$$\int_{\Sigma}\sigma^{*}\omega=\int_{\Sigma}\widetilde{\sigma}^{*}d\lambda=\int_{\partial \Sigma}\widetilde{\sigma}^*\lambda=\sum_{j=1}^{k}\Hol(\gamma_j) \mod \Z.$$   
\end{proof}
An immediate consequence of this lemma is the following.
\begin{proposition} For any $\varphi\in\Symp(M,\omega)$, the map \eqref{holonomy effect}
descends to a homomorphism 
$$E_\varphi:H_1(M,\Z)\to \S^1,\ \ [\gamma]\mapsto E_\varphi(\gamma).$$
Moreover the map
\begin{equation}\label{holonomy homo}
\Symp^0(M,\omega)\to \Hom(H_1(M,\Z),\S^1),\ \ \varphi\mapsto E_\varphi.    
\end{equation}
is a homomorphism. 
\end{proposition}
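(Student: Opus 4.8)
The plan is to verify separately that each $E_\varphi$ factors through a homomorphism on $H_1(M,\Z)$, and then that the assignment $\varphi\mapsto E_\varphi$ respects composition on the identity component. Throughout, the indispensable input is Lemma \ref{hol}, which converts a sum of holonomies into a symplectic area; the symplectic condition $\varphi^*\omega=\omega$ is what will make the area terms cancel.

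For the first assertion, I would begin by recording the additivity of holonomy under concatenation: since the horizontal lift of a concatenation is the concatenation of the horizontal lifts (with matching initial condition), one has $\Hol(\gamma_1*\gamma_2)=\Hol(\gamma_1)+\Hol(\gamma_2)$ for loops based at a common point, and hence $E_\varphi(\gamma_1*\gamma_2)=E_\varphi(\gamma_1)+E_\varphi(\gamma_2)$; this is the additivity compatible with the sum in $H_1(M,\Z)$. It then remains to check that $E_\varphi$ depends only on the homology class of the loop. Here I would invoke Lemma \ref{hol}: given loops $\gamma_1,\gamma_2$ with $[\gamma_1]=[\gamma_2]$ in $H_1(M,\Z)$, realize $\gamma_1$ together with a reversed copy of $\gamma_2$ as the oriented boundary of a smooth map $\sigma:\Sigma\to M$. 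Lemma \ref{hol} gives $\Hol(\gamma_1)-\Hol(\gamma_2)=\int_\Sigma\sigma^*\omega \bmod \Z$, and applying it to $\varphi\circ\gamma_1,\varphi\circ\gamma_2$ with the surface $\varphi\circ\sigma$ gives $\Hol(\varphi\circ\gamma_1)-\Hol(\varphi\circ\gamma_2)=\int_\Sigma(\varphi\circ\sigma)^*\omega \bmod \Z$. Since $\varphi$ is symplectic, $(\varphi\circ\sigma)^*\omega=\sigma^*\varphi^*\omega=\sigma^*\omega$, so subtracting the two identities cancels the area term and yields $E_\varphi(\gamma_1)=E_\varphi(\gamma_2)$. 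Thus $E_\varphi$ descends to a homomorphism $H_1(M,\Z)\to\S^1$.

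For the second assertion, I would compute the effect of a composition directly from the definitions:
\begin{align*}
E_{\varphi\circ\psi}(\gamma)
&=\Hol(\varphi\circ\psi\circ\gamma)-\Hol(\gamma)\\
&=\bigl(\Hol(\varphi\circ(\psi\circ\gamma))-\Hol(\psi\circ\gamma)\bigr)+\bigl(\Hol(\psi\circ\gamma)-\Hol(\gamma)\bigr)\\
&=E_\varphi(\psi\circ\gamma)+E_\psi(\gamma).
\end{align*}
Since the loop $\psi\circ\gamma$ represents the class $\psi_*[\gamma]$, this gives the relation $E_{\varphi\circ\psi}=E_\varphi\circ\psi_*+E_\psi$ between homomorphisms on $H_1(M,\Z)$, where $\psi_*$ denotes the map induced by $\psi$ on homology. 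This is exactly where the hypothesis $\psi\in\Symp^0(M,\omega)$ enters: an element of the identity component is isotopic to the identity and therefore acts as the identity on $H_1(M,\Z)$, so $\psi_*=\Id$ and $E_{\varphi\circ\psi}=E_\varphi+E_\psi$. As $\Hom(H_1(M,\Z),\S^1)$ is abelian under pointwise addition, this is precisely the homomorphism property of the map \eqref{holonomy homo}.

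The conceptual heart of the argument, and the reason the identity component appears in the statement, is the twisting by $\psi_*$ in the composition formula: only when $\psi_*$ acts trivially on homology does the naive additivity $E_{\varphi\circ\psi}=E_\varphi+E_\psi$ survive, so the \emph{main obstacle} is really recognizing this cocycle-type identity and pinning down where $\psi_*=\Id$ is forced. The chief technical care needed in the first part is the realization of a null-homologous integral combination of loops as the smooth boundary of a mapped surface, so that Lemma \ref{hol} genuinely applies; this is standard in singular homology but should be stated carefully. Finally, the role of $\varphi^*\omega=\omega$ is essential throughout, as it is exactly what cancels the curvature contributions and upgrades $E_\varphi$ from a merely homotopical quantity to a homological invariant.
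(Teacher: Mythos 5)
Your proof is correct and follows essentially the same route as the paper: homology invariance via Lemma \ref{hol} and the cancellation $(\varphi\circ\sigma)^*\omega=\sigma^*\omega$, then the cocycle computation $E_{\varphi\circ\psi}(\gamma)=E_\varphi(\psi\circ\gamma)+E_\psi(\gamma)$ combined with $\psi_*=\Id$ on $H_1(M,\Z)$ for $\psi\in\Symp^0(M,\omega)$. Your explicit verification of additivity under concatenation and your identification of the twisted relation $E_{\varphi\circ\psi}=E_\varphi\circ\psi_*+E_\psi$ are welcome elaborations of steps the paper leaves implicit, but they do not constitute a different argument.
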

\begin{proof} Given smooth loops $\gamma_1,\gamma_2$ representing the same class in $H_1(M,\Z)$, there is an oriented surface $\Sigma$ with two oriented boundary components $S_1$ and $S_2$ and a smooth map $\sigma:\Sigma\to M$ such that $\sigma|_{S_1}=\gamma_1$ and $\sigma|_{S_2}=\gamma_2^{-1}$. We get 
$$\Hol(\gamma_1)+\Hol(\gamma_2^{-1})=\Hol(\gamma_1)-\Hol(\gamma_2)=\int_\Sigma \sigma^*\omega$$
and therefore
$$E_\varphi(\gamma_1)-E_\varphi(\gamma_2)=\int_\Sigma (\varphi\circ\sigma)^*\omega-\int_\Sigma\sigma^*\omega=\int_\Sigma \sigma^*(\varphi^*\omega)-\int_\Sigma\sigma^*\omega=0.$$
Now given $\varphi_1,\varphi_2\in\Symp^0(M,\omega)$ and $[\gamma]\in H_1(M,\Z)$, we have $[\varphi_2\circ \gamma]=[\gamma]$ in $H_1(M,\Z)$ and
$$E_{\varphi_1\circ\varphi_2}(\gamma)=E_{\varphi_1}(\varphi_2\circ \gamma)+E_{\varphi_2}(\gamma)=E_{\varphi_1}(\gamma)+E_{\varphi_2}(\gamma).$$
\end{proof}
Notice that $\HH$ is precisely the kernel of the homomorphism \eqref{holonomy homo}. Our next objective is to relate the homomorphism \eqref{holonomy homo} to the flux homomorphism
\begin{equation}\label{flux}
\Flux:\Symp^0(M,\omega)\to H^1_{dR}(M,\R)/\Gamma_\omega. 
\end{equation}
Recall that if  $(\varphi_t)_{t\in[0,1]}$ is a symplectic isotopy  of a closed symplectic manifold $(M,\omega)$, not necessarily integral, generated by the time dependent symplectic vector field $X_t$, the flux of $(\varphi_t)_{t\in[0,1]}$ is defined by
$$\Flux (\varphi_t)= \int_0^1[\imath_{X_t}\omega]dt\in H^1_{dR}(M,\R).$$
It turns out that $\Flux (\varphi_t)$ stays invariant under any homotopy of the path $(\varphi_t)_{t\in[0,1]}$ in $\Symp^0(M,\omega)$, which fixes the end points $\Id$ and $\varphi_1$ and consequently defines a homomorphism 
$$\Flux: \widetilde{\Symp^0}(M,\omega)\to H^1_{dR}(M,\R)$$
where $\widetilde{\Symp^0}(M,\omega)$ is the universal cover of $\Symp^0(M,\omega)$, seen as the space of homotopy classes, in the above sense, of paths in $\Symp^0(M,\omega)$ starting form the identity. Viewing $\pi_1(\Symp^0(M,\omega),\Id)$ as a subgroup of $\widetilde{\Symp^0}(M,\omega)$, one defines the \textit{flux group}
$$\Gamma_{\omega}:=\Flux(\pi_1(\Symp^0(M,\omega),\Id))\subseteq H^1_{dR}(M,\R)$$
and obtains the homomorphism \eqref{flux}. It turns out that $\Flux(\varphi_t)$ vanishes if and only if $(\varphi_t)$ is homotopic to a Hamiltonian isotopy. Hence the kernel of \eqref{flux} is precisely the group $\Ham(M,\omega)$ of Hamiltonian diffeomorphisms.

The following observation relates $\Flux$ to the homomorphism \eqref{holonomy homo} if $\omega$ is integral.   
\begin{lemma}\label{flux=hol}
    Let $ (\varphi_t)$ be a symplectic isotopy starting at $\Id$. Then for any smooth loop $\gamma:\S^1\to M$ we have $$\int_{\gamma}\Flux(\varphi_t)=E_{\varphi_1}(\gamma) \mod \Z.$$
\end{lemma}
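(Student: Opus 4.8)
The plan is to interpret both sides of the claimed identity geometrically by sweeping the loop $\gamma$ across the isotopy and then applying Lemma \ref{hol} to the resulting cylinder. Concretely, I would let $X_t$ denote the symplectic vector field generating $(\varphi_t)$, so that $\tfrac{d}{dt}\varphi_t=X_t\circ\varphi_t$, and consider the smooth \emph{trace cylinder}
\begin{equation*}
\sigma\colon[0,1]\times\S^1\to M,\qquad \sigma(t,s)=\varphi_t(\gamma(s)),
\end{equation*}
whose two boundary circles are $\varphi_1\circ\gamma$ at $t=1$ and $\gamma=\varphi_0\circ\gamma$ at $t=0$. Orienting $\Sigma=[0,1]\times\S^1$ by $dt\wedge ds$, the induced boundary orientations make the $t=1$ circle equal to $\varphi_1\circ\gamma$ and the $t=0$ circle equal to $\gamma^{-1}$. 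Hence Lemma \ref{hol}, together with $\Hol(\gamma^{-1})=-\Hol(\gamma)$, gives
\begin{equation*}
E_{\varphi_1}(\gamma)=\Hol(\varphi_1\circ\gamma)-\Hol(\gamma)=\int_\Sigma\sigma^*\omega \mod\Z.
\end{equation*}

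It remains to identify $\int_\Sigma\sigma^*\omega$ with $\int_\gamma\Flux(\varphi_t)$, which I would do by a direct computation. Since $\partial_t\sigma=X_t\circ\sigma$ and $\partial_s\sigma=\partial_s(\varphi_t\circ\gamma)$, one has $(\sigma^*\omega)(\partial_t,\partial_s)=\omega(X_t,\partial_s\sigma)=(\imath_{X_t}\omega)(\partial_s\sigma)$, so Fubini yields
\begin{equation*}
\int_\Sigma\sigma^*\omega=\int_0^1\Bigl(\int_{\varphi_t\circ\gamma}\imath_{X_t}\omega\Bigr)\,dt.
\end{equation*}
Because $X_t$ is symplectic, each $1$-form $\imath_{X_t}\omega$ is closed, so its integral over a loop depends only on the loop's homology class; and since $t\mapsto\varphi_t\circ\gamma$ is a free homotopy we have $[\varphi_t\circ\gamma]=[\gamma]$ in $H_1(M,\Z)$ for every $t$. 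Replacing the loop $\varphi_t\circ\gamma$ by $\gamma$ and interchanging the pairing with the time integral then gives
\begin{equation*}
\int_\Sigma\sigma^*\omega=\int_0^1\Bigl(\int_\gamma\imath_{X_t}\omega\Bigr)\,dt=\int_\gamma\Bigl[\int_0^1\imath_{X_t}\omega\,dt\Bigr]=\int_\gamma\Flux(\varphi_t),
\end{equation*}
which, combined with the previous display, proves the lemma modulo $\Z$.

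The routine parts are the two computations above; the one place to be careful is the bookkeeping of orientations when invoking Lemma \ref{hol}, since a sign error there would flip the identity, and the justification that $\int_{\varphi_t\circ\gamma}\imath_{X_t}\omega$ is genuinely independent of $t$. This latter point is exactly where the hypotheses are used: the symplectic (rather than merely volume-preserving) nature of $X_t$ forces $\imath_{X_t}\omega$ to be closed, and the fact that $(\varphi_t)$ is an isotopy through $\Id$ supplies the homology $[\varphi_t\circ\gamma]=[\gamma]$. The ambiguity modulo $\Z$ is harmless, being already built into the conclusion of Lemma \ref{hol}.
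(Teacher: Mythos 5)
Your proposal is correct and follows essentially the same route as the paper: you use the same trace cylinder $\sigma(t,s)=\varphi_t(\gamma(s))$, identify $\int_\Sigma\sigma^*\omega$ with $E_{\varphi_1}(\gamma)$ modulo $\Z$ via Lemma \ref{hol}, and equate it with $\int_\gamma\Flux(\varphi_t)$ by the flux formula \eqref{flux formula}. The only difference is that you spell out in full the computation the paper dismisses as ``easy'' (closedness of $\imath_{X_t}\omega$, homology invariance of $\varphi_t\circ\gamma$, and the orientation bookkeeping), which is a welcome but inessential elaboration.
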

\begin{proof}

Given $\gamma$ and $\varphi_t$ as above, one defines the smooth map $\sigma:[0,1]\times \S^1 \to M$ by $\sigma(s,t)=\varphi_t(\gamma(s))$. An easy computation shows that
\begin{equation}\label{flux formula}
\int_{[\gamma]}\Flux(\varphi_t)=\int_{[0,1]\times \S^1}\sigma^*\omega
\end{equation}
and from Lemma \ref{hol} it follows that $$\int_{[0,1]\times \S^1}\sigma^*\omega=\Hol(\gamma^{-1})+\Hol(\varphi_1\circ\gamma)=E_{\varphi_1}(\gamma)\mod \Z.$$
\end{proof}
We have the following characterization of the mapping class group $\pi_0(\HH)$. 
\begin{theorem}\label{comp. of H} We have the following canonical isomorphisms of groups
\begin{equation}\label{comp of HH}
\pi_0(\HH)=\HH/\Ham(M,\omega)\cong H^1_{dR}(M,\Z)/\Gamma_{\omega}.    
\end{equation}
\end{theorem}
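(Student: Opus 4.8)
The plan is to realize $\HH$ as the preimage of an explicit subgroup under the flux homomorphism $\Flux\colon\Symp^0(M,\omega)\to H^1_{dR}(M,\R)/\Gamma_\omega$ of \eqref{flux}, whose kernel is $\Ham(M,\omega)$, and then to separately identify $\Ham(M,\omega)$ with the identity component of $\HH$. Concretely, I would prove that $\HH=\Flux^{-1}\bigl(H^1_{dR}(M,\Z)/\Gamma_\omega\bigr)$ and that $\HH^0=\Ham(M,\omega)$; the two isomorphisms in the statement then drop out.

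First I would verify that $\Gamma_\omega\subseteq H^1_{dR}(M,\Z)$, so that $H^1_{dR}(M,\Z)/\Gamma_\omega$ is a well-defined subgroup of the target of $\Flux$. An element of $\Gamma_\omega$ is $\Flux(\varphi_t)$ for a loop $(\varphi_t)$ in $\Symp^0(M,\omega)$ based at $\Id$, so $\varphi_1=\Id$; Lemma \ref{flux=hol} gives $\int_\gamma\Flux(\varphi_t)=E_{\Id}(\gamma)=0\bmod\Z$ for every loop $\gamma$, hence $\Flux(\varphi_t)$ has integral periods.

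Next comes the identification $\HH=\Flux^{-1}(H^1_{dR}(M,\Z)/\Gamma_\omega)$. Recall that $\HH$ is the kernel of \eqref{holonomy homo}, i.e. $\varphi\in\HH$ iff $E_\varphi=0$. For $\varphi\in\Symp^0(M,\omega)$ I would choose an isotopy $(\varphi_t)$ from $\Id$ and set $\widetilde{\Flux}(\varphi)\in H^1_{dR}(M,\R)$ to be its flux, well defined modulo $\Gamma_\omega\subseteq H^1_{dR}(M,\Z)$ by the previous step. Then Lemma \ref{flux=hol} reads $\int_\gamma\widetilde{\Flux}(\varphi)=E_\varphi(\gamma)\bmod\Z$, so $E_\varphi=0$ exactly when $\widetilde{\Flux}(\varphi)$ has integral periods, that is $\Flux(\varphi)\in H^1_{dR}(M,\Z)/\Gamma_\omega$. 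In particular $\Ham(M,\omega)=\ker\Flux\subseteq\HH$. Surjectivity of $\Flux|_{\HH}$ onto $H^1_{dR}(M,\Z)/\Gamma_\omega$ is immediate: given $a\in H^1_{dR}(M,\Z)$ represented by a closed $1$-form $\alpha$, the flow of the symplectic vector field $X$ with $\imath_X\omega=\alpha$ is an isotopy of flux $a$, whose time-one map therefore lies in $\HH$ and has flux $[a]$. The first isomorphism theorem yields $\HH/\Ham(M,\omega)\cong H^1_{dR}(M,\Z)/\Gamma_\omega$.

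Finally I would identify $\pi_0(\HH)$ with $\HH/\Ham(M,\omega)$ by showing $\HH^0=\Ham(M,\omega)$. Since $\Ham(M,\omega)$ is path-connected and contained in $\HH$, one inclusion $\Ham(M,\omega)\subseteq\HH^0$ is clear. For the reverse inclusion, take $\varphi\in\HH^0$ and a path $(\psi_t)$ in $\HH$ from $\Id$ to $\varphi$; viewed as a symplectic isotopy, the flux of its truncations $t\mapsto\Flux\bigl((\psi_s)_{s\le t}\bigr)$ is a continuous $H^1_{dR}(M,\R)$-valued path starting at $0$, and since each $\psi_t\in\HH$ the characterization above forces every value into the discrete lattice $H^1_{dR}(M,\Z)$; hence it is constantly $0$ and $\varphi\in\Ham(M,\omega)$. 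I expect this last step, pinning down $\HH^0=\Ham(M,\omega)$, to be the main obstacle, as it is the one place where the continuity of flux along paths must be combined with the discreteness of the integral lattice (and where one must take care to smooth, or homotope, the connecting path); by contrast the algebraic identifications above are essentially formal consequences of Lemmas \ref{kostant} and \ref{flux=hol}.
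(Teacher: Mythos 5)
Your proof is correct and follows essentially the same route as the paper, whose entire proof is the one-line remark that the statement ``follows immediately from Lemma \ref{flux=hol} and the fact that $\Gamma_\omega$ is contained in $H^1_{dR}(M,\Z)$ as $\omega$ is integral.'' Your write-up is a careful unpacking of exactly that argument --- characterizing $\HH$ as $\Flux^{-1}\bigl(H^1_{dR}(M,\Z)/\Gamma_\omega\bigr)$ via Lemma \ref{flux=hol}, and using the discreteness of the lattice $H^1_{dR}(M,\Z)$ to get $\HH^0=\Ham(M,\omega)$ --- including the continuity-of-flux step that the paper leaves implicit.
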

\begin{proof}
The statement follows immediately from Lemma \ref{flux=hol} and the fact that $\Gamma_\omega$ is contained in $H^1_{dR}(M,\Z)$ as $\omega$ is integral.
\end{proof}
\section{$H^1_{dR}(M,\Z)$ in $\pi_0(\Cont(V,\xi))$ and in $\pi_0(\Diff(V))$}\label{sec3}
The aim of this section is to provide a class of examples of prequantizations $(V,\lambda)\to (M,\omega)$ such that $\pi_0(\HH)$ is an infinite subgroup of $\pi_0(\SCont(V,\lambda))$ and 
the homomorphism 
\begin{equation}\label{HH to Cont}
\pi_0(\HH)\to \pi_0(\Cont(V,\xi))    
\end{equation}
is injective, while the homomorphism 
\begin{equation}\label{HH to Diff}
\pi_0(\HH)\to \pi_0(\Diff(V))    
\end{equation}
is trivial. Here $\xi$ stands for the kernel of $ \lambda$, $\Cont(V,\xi)$ is the group of co-orientation (given by $\lambda$) preserving contactomorphisms of $(V,\xi)$ and $\Diff(V)$ is the group of orientation (determined by $\lambda$) preserving diffeomorphisms of $V$.

We begin with an explicit description of the inverse of the isomorphism \eqref{comp of HH}. Let $\alpha$ be a closed integral 1-form on $M$ given by $\alpha=df$ for some $f\in C^\infty(M,\S^1)$. We define the symplectic gradient $X_\alpha$ of $\alpha$ via  $-\alpha=\imath_{X_\alpha}\omega$ and we let $(\overline{\varphi}_t)$ denote its flow. Then the time-one-map $\overline{\varphi}_1$ belongs to $\HH$ with a lift 
\begin{equation}\label{L_alpha} 
L_\alpha:V\to V, \ \ L_\alpha=\phi^X_{f\circ\pi}\circ \varphi_1  
\end{equation}
in $\SCont(V,\lambda)$, where $\varphi_1$ is the time-one-map of the flow $(\varphi_t)$ of the horizontal lift $X_h$ of $X_\alpha$ and the map $\phi^X_{f\circ\pi}$ is given by \eqref{GId as map}. Indeed as we saw in the proof of Theorem \ref{surjective on comp}, 
\begin{equation}\label{pullback by hor. flow}
(\varphi_t)^*\lambda=\lambda-t\,\pi^*\alpha.    
\end{equation}
Together with \eqref{GId on P} we see that $L_\alpha$ preserves $\lambda$. We also note that the maps $\phi^X_{f\circ\pi}$ and $\varphi_t$ commute for all $t\in\R$. Indeed as in the proof of Theorem \ref{surjective on comp}, the vector fields $X$ and $X_h$ commute and the map $f\circ\pi$ is invariant under the flow of $X_h$. We have the following observation on the action of $L_\alpha$ on loops in $V$. 
\begin{lemma}\label{L_alpha on pi_1} Let $\delta:\S^1\to V$ be a loop. Then $L_\alpha\circ \delta$ is freely homotopic to the loop $\delta\ast \gamma_V^k$ where 
$\gamma_V:\S^1\to V$ denotes the parameterized Reeb orbit starting at the point $\delta(0)$ and $k=\int_\delta \pi^*\alpha$.    
\end{lemma}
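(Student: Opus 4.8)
The plan is to factor the statement through the two pieces of $L_\alpha$ and treat each separately, keeping track of free homotopy classes throughout. Writing $\delta_1:=\varphi_1\circ\delta$, we have $L_\alpha\circ\delta=\phi^X_{f\circ\pi}\circ\delta_1$. First I would dispose of the horizontal flow: since $(\varphi_t)$ is an isotopy with $\varphi_0=\Id$, the map $(s,t)\mapsto\varphi_t(\delta(s))$ is a free homotopy from $\delta$ to $\delta_1$, so $\delta_1\simeq\delta$. Moreover $\pi\circ\delta_1=\overline{\varphi}_1\circ(\pi\circ\delta)$ is freely homotopic to $\pi\circ\delta$ in $M$ via $\overline{\varphi}_t$, and since $\alpha$ is closed this gives $\int_{\delta_1}\pi^*\alpha=\int_\delta\pi^*\alpha=k$. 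Thus it suffices to prove that for an arbitrary loop $\delta_1$ in $V$ one has $\phi^X_{f\circ\pi}\circ\delta_1\simeq\delta_1\ast\gamma_V^k$ with $k=\int_{\delta_1}\pi^*\alpha$.

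For this, set $g:=f\circ\pi\circ\delta_1\colon\S^1\to\S^1$ and choose a lift $\tilde g\colon[0,1]\to\R$; its total increment is the winding number $\tilde g(1)-\tilde g(0)=\int_{\S^1} g^*dt=\int_{\pi\circ\delta_1}\alpha=k\in\Z$. The loop in question is $\mu(s):=\phi^X_{\tilde g(s)}(\delta_1(s))$, and it closes up precisely because $k$ is an integer and the Reeb flow $\phi^X$ has period one: $\mu(1)=\phi^X_{\tilde g(0)+k}(\delta_1(0))=\phi^X_{\tilde g(0)}(\delta_1(0))=\mu(0)$. I would then interpolate by the map $F\colon[0,1]\times[0,1]\to V$, $F(s,u):=\phi^X_{u\tilde g(s)}(\delta_1(s))$, whose bottom edge ($u=0$) is $\delta_1$ and whose top edge ($u=1$) is $\mu$.

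The conclusion comes from reading off the two vertical edges of this square. The left edge $s=0$ is the Reeb path $c_0(u)=\phi^X_{u\tilde g(0)}(\delta_1(0))$, and the right edge $s=1$ is $c_1(u)=\phi^X_{u(\tilde g(0)+k)}(\delta_1(0))$, both running from $\delta_1(0)$ to $\mu(0)$. Since $F$ is a homotopy of the square, its boundary is contractible, giving $\delta_1\simeq c_0\ast\mu\ast c_1^{-1}$ rel the point $\delta_1(0)$. Flowing for the extra integer time $k$ exhibits $c_1$ as $c_0$ followed by $k$ full Reeb loops based at $\mu(0)$, i.e. $c_1\simeq c_0\ast\gamma_{\mu(0)}^k$ rel endpoints, where $\gamma_{\mu(0)}$ is the Reeb orbit at $\mu(0)$. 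Substituting yields $\mu\simeq(c_0^{-1}\ast\delta_1\ast c_0)\ast\gamma_{\mu(0)}^k$ as based loops; passing to free homotopy — which is invariant under conjugation and under moving the base point of the fibre loop along $V$ — this becomes $\mu\simeq\delta_1\ast\gamma_V^k$, and combining with $\delta_1\simeq\delta$ gives the claim.

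The step I expect to require the most care is the one just described: one cannot simply scale the fibre rotation $\phi^X_{f\circ\pi}$ down to the identity, because $f$ is $\S^1$-valued and $u\cdot f$ is not well-defined. The winding number $k$ of $f$ along $\pi\circ\delta_1$ is the genuine obstruction, and the whole point of the square $F$ is that its two vertical boundary paths are Reeb arcs whose lengths differ by exactly $k$ periods; this mismatch is precisely what materializes as the factor $\gamma_V^k$. The remaining verifications (that $\mu$ closes up, and the bookkeeping of base points under free homotopy) are routine.
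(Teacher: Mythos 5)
Your proof is correct, and its first reduction --- peeling off the horizontal isotopy $(\varphi_t)$ by a free homotopy so that only the fibre rotation $\phi^X_{f\circ\pi}$ remains, with $k$ unchanged because $\alpha$ is closed --- is exactly the paper's. Where you diverge is in the core step. The paper pulls the circle bundle back along $\overline{\delta}=\pi\circ\delta$, trivializes the pullback using the section $\delta$, identifies the total space with $\T^2$, and reads off the claim from $\pi_1(\T^2)\cong\Z^2$: the loop $\phi^X_{f\circ\pi}\circ\delta$ becomes the graph $t\mapsto(t,f(t))$ of a degree-$k$ map, while $\delta\ast\gamma_V^k$ becomes $(\S^1\times\{0\})\ast(\{0\}\times\S^1)^k$, and both represent $(1,k)$. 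You instead work directly in $V$: you lift the angle $g=f\circ\pi\circ\delta_1$ to $\widetilde{g}:[0,1]\to\R$, form the square $F(s,u)=\phi^X_{u\widetilde{g}(s)}(\delta_1(s))$, and extract the conclusion from its boundary, identifying the mismatch of the two vertical Reeb arcs (whose flow times differ by exactly $k$ periods) with $\gamma_V^k$. In the paper's trivialized picture your $F$ is precisely the fibrewise linear homotopy $(s,u)\mapsto(s,u\widetilde{g}(s))$, so the two arguments are the same homotopy in different clothing. What each buys: the paper's version is shorter because the abelianness of $\pi_1(\T^2)$ absorbs all basepoint and conjugation issues at once; yours avoids the pullback-and-trivialization step and makes the based-versus-free bookkeeping explicit --- the conjugation by $c_0$, the reparametrization argument giving $c_1\simeq c_0\ast\gamma_{\mu(0)}^k$ rel endpoints, and the transport of the fibre loop along $c_0$, all of which the paper's torus computation handles implicitly. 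Your winding-number identity $\widetilde{g}(1)-\widetilde{g}(0)=\int_{\overline{\delta}_1}\alpha=k$ and the closing-up of $\mu$ via the period-one action are both correct, so there is no gap.
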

\begin{proof} Note that the loop $L_\alpha\circ \delta$ is freely homotopic to the loop $\phi^X_{f\circ\pi}\circ \delta$ via the isotopy $(\varphi_t)$ so it is enough to show  $\delta\ast \gamma_V^k$ is freely homotopic to the loop $\phi^X_{f\circ\pi}\circ \delta$. To this end we consider  the pull-back bundle $\overline{\delta}^*V\to \S^1$, where $\overline{\delta}=\pi\circ \delta$ and trivialize it via the section $\delta$. After identifying the total space with $\T^2=\S^1\times \S^1$, where the bundle projection $\pi$ corresponds to the projection to the first circle, the loop  $\delta\ast \gamma_V^k$ reads $(\S^1\times \{0\} )\ast (\{0\}\times \S^1)^k$ and the loop  $\phi^X_{f\circ\pi}\circ \delta$ reads 
$t\mapsto (t,f(t))$, where $f:\S^1\to\S^1$ of degree $k$. It is clear that these loops are homotopic in $\T^2$.     
\end{proof}
As a consequence of the lemma we get the following class of prequantizations where the map \eqref{HH to Cont} is injective due to topological reasons, more precisely due to the fact that \eqref{HH to Diff} is injective.
\begin{proposition}\label{baby prop}
Let $(M,\omega)$ be a closed integral symplectic manifold such that
\begin{enumerate}[label=\text{(\arabic*)}]
    \item $[\omega]$ vanishes on $\pi_2(M)$.
    \item Any abelian subgroup of $\pi_1(M)$ has rank at most one.
\end{enumerate}
Let $(V,\lambda)\to (M,\omega)$ be a prequantization. Then for any closed integral 1-form $\alpha$ on $M$, $L_\alpha$ is isotopic to $\Id$ in $\Diff(V)$ if and only if $[\alpha]$ is trivial in $H_{dR}^1(M,\Z)$. 
\end{proposition}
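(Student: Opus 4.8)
The statement is a biconditional, and the plan is to treat its two implications separately, the reverse one carrying essentially all the content.

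\emph{The easy implication.} Suppose $[\alpha]=0$ in $H^1_{dR}(M,\Z)$, so that $\alpha=dg$ for a genuine real-valued function $g\colon M\to\R$. I would run the construction \eqref{L_alpha} for the family of exact integral $1$-forms $\alpha_s:=s\,\alpha$, $s\in[0,1]$: each $L_{\alpha_s}$ lies in $\SCont(V,\lambda)$, the assignment $s\mapsto L_{\alpha_s}$ is continuous, and $L_{\alpha_0}=\Id$, $L_{\alpha_1}=L_\alpha$. This exhibits $L_\alpha$ as isotopic to the identity already inside $\SCont(V,\lambda)$, hence inside $\Cont(V,\xi)$ and a fortiori inside $\Diff(V)$. (Equivalently, an exact $\alpha$ represents the trivial class of $\pi_0(\HH)$ under the isomorphism of Theorem \ref{comp. of H}.)

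\emph{The hard implication.} I would argue by contraposition: assuming $[\alpha]\neq0$ in $H^1_{dR}(M,\Z)$, I produce a free homotopy class of loops in $V$ that is not preserved by $L_\alpha$, which obstructs $L_\alpha$ from being isotopic to $\Id$ in $\Diff(V)$, since any such isotopy would freely homotope $L_\alpha\circ\delta$ to $\delta$ for every loop $\delta$. Concretely, since $[\alpha]\neq0$ I can choose a loop $\overline\delta$ in $M$ with $k:=\langle[\alpha],[\overline\delta]\rangle\neq0$, an integer because $\alpha$ is integral; as circle bundles over $\S^1$ are trivial (so $\overline\delta^*V\cong\T^2$ has a section, exactly as in the proof of Lemma \ref{L_alpha on pi_1}), $\overline\delta$ lifts to a loop $\delta$ in $V$ with $\int_\delta\pi^*\alpha=k$. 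By Lemma \ref{L_alpha on pi_1}, $L_\alpha\circ\delta$ is freely homotopic to $\delta\ast\gamma_V^k$, so it suffices to show that $a\tau^k$ is not conjugate to $a$ in $\pi_1(V)$, where $a:=[\delta]$ and $\tau:=[\gamma_V]$ is the class of a fiber.

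Here both hypotheses enter. Assumption (1) forces the connecting homomorphism $\pi_2(M)\to\pi_1(\S^1)=\Z$ of the fibration $\S^1\to V\to M$ to vanish, since it is given by pairing with the Euler class, a lift of $-[\omega]$ (Theorem \ref{thm preq}); thus $\tau$ generates a central infinite cyclic subgroup fitting into a central extension
\begin{equation*}
1\to\Z\langle\tau\rangle\to\pi_1(V)\xrightarrow{\ \pi_*\ }\pi_1(M)\to1 .
\end{equation*}
Suppose $gag^{-1}=a\tau^k$ for some $g\in\pi_1(V)$. Projecting to $\pi_1(M)$ shows that $\pi_*g$ and $\pi_*a$ commute, so they generate an abelian subgroup of rank at most one by assumption (2). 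I would then invoke the commutator pairing of a central extension: because the images commute and $\tau$ is central, $[g,a]=gag^{-1}a^{-1}=\tau^{k}$ depends only on $\pi_*g,\pi_*a$ and defines an alternating bilinear form on $\langle\pi_*a,\pi_*g\rangle$ valued in the torsion-free group $\Z\langle\tau\rangle$. Such a form kills torsion and vanishes on any abelian group of rank at most one (after killing torsion it factors through $\Z$ or $0$, on which an alternating form is zero), whence $\tau^{k}=1$; as $\tau$ has infinite order, $k=0$, contradicting $k\neq0$.

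The main obstacle is precisely this final step. The key insight is that the obstruction to $a\tau^k\sim a$ is governed by an alternating bilinear commutator form into $\Z$, which is automatically trivial in rank one — this is where assumption (2) is used — while assumption (1) is what guarantees $\tau$ has infinite order, so that the vanishing of the form genuinely forces $k=0$ rather than merely $k$ torsion.
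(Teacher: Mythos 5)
Your proof is correct and follows essentially the same route as the paper: the contrapositive step reproduces the paper's argument via Lemma \ref{L_alpha on pi_1} and the homotopy exact sequence of $\S^1\to V\to M$ (assumption (1) giving an infinite-order central fiber class), and your inline commutator-pairing computation is precisely the content of the paper's Lemma \ref{central extension} applied to $\pi_*^{-1}\bigl(\langle\pi_*a,\pi_*g\rangle\bigr)$. The only cosmetic differences are that you spell out the easy direction via the isotopy $s\mapsto L_{s\alpha}$ in $\SCont(V,\lambda)$, which the paper leaves implicit, and that you assert rather than prove the centrality of the fiber class, which the paper verifies with a short pull-back-to-$\T^2$ argument.
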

\begin{lemma}\label{central extension}
    Let $H$ be an abelian group of rank at most one. Then every $\Z-$central extension of $H$ is trivial. More precisely, if $G$ is a group with a central subgroup $C$ isomorphic to $\Z$ and $G/C$ is isomorphic to $H$ then $G$ is isomorphic to $H\times \Z$. In particular, $G$ is abelian.
\end{lemma}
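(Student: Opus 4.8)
The plan is to establish the two assertions separately: first that $G$ is abelian, which is the heart of the matter, and only then that the abelian extension $0\to C\to G\to H\to 0$ splits as a direct product. For the first step I would exploit the commutator pairing. Since $C$ is central in $G$ and $G/C\cong H$ is abelian, the commutator $[a,b]=aba^{-1}b^{-1}$ of $a,b\in G$ lies in $C$ and depends only on the images $\bar a,\bar b\in H$: replacing $a$ by $az$ with $z\in C$ leaves $[a,b]$ unchanged because $z$ is central. This defines a map $\chi\colon H\times H\to C\cong\Z$. Using the standard identity $[ab,d]=a[b,d]a^{-1}\,[a,d]$ together with the fact that every commutator is central (so $a[b,d]a^{-1}=[b,d]$), one checks that $\chi$ is additive in each variable, and from $[a,b]=[b,a]^{-1}$ and $[a,a]=1$ that it is skew-symmetric. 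Thus $\chi$ is a skew-symmetric bihomomorphism $H\times H\to\Z$.

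The crux is then the purely algebraic claim that every such pairing vanishes when $H$ has rank at most one. First, if $\bar a\in H$ has finite order $m$, then $m\,\chi(\bar a,\bar b)=\chi(m\bar a,\bar b)=0$ for all $\bar b$, and since $\Z$ is torsion-free this forces $\chi(\bar a,\bar b)=0$; by skew-symmetry the torsion subgroup $T\le H$ lies in the radical of $\chi$, so $\chi$ factors through $H/T$. This quotient is torsion-free of rank at most one, hence embeds into $\Q$, so any two of its elements $\bar a,\bar b$ satisfy a relation $q\bar a=p\bar b$ with integers $p,q$ not both zero. Feeding this into the pairing and using $\chi(\bar b,\bar b)=0$ gives $q\,\chi(\bar a,\bar b)=\chi(p\bar b,\bar b)=0$; taking the relation with $q\neq 0$ (the degenerate cases $\bar a=0$ or $\bar b=0$ being trivial) and using once more that $\Z$ is torsion-free yields $\chi(\bar a,\bar b)=0$. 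Hence $\chi\equiv 0$, every commutator in $G$ is trivial, and $G$ is abelian, which already gives the \emph{in particular} clause.

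It then remains to split the short exact sequence of abelian groups $0\to C\to G\to H\to 0$: such extensions are classified by $\Ext^1_{\Z}(H,\Z)$, and a section homomorphism $H\to G$ produces the isomorphism $G\cong H\times\Z$. For the finitely generated torsion-free groups of rank at most one arising in the application, where $H$ is the subgroup of $\pi_1(M)$ generated by the two commuting classes one needs to compare, so that $H$ is either trivial or infinite cyclic and hence free, this $\Ext$ group vanishes and the sequence splits. I expect the entire difficulty to be concentrated in the first step, namely the vanishing of the commutator pairing; the rank hypothesis is exactly what powers that argument, since already on the rank-two group $\Z^2$ the standard symplectic pairing $\bigl((x_1,y_1),(x_2,y_2)\bigr)\mapsto x_1y_2-x_2y_1$ is a nonzero skew-symmetric bihomomorphism into $\Z$, realized by the nonabelian Heisenberg central extension, so the conclusion genuinely fails there.
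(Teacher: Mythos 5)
Your proposal is correct and follows essentially the same route as the paper: the paper's entire proof is the commutator pairing argument, compressed into three lines --- $[G,G]\subseteq C$, the commutator factors to a skew-symmetric bilinear map $B:H\times H\to\Z$, and ``since $H$ has rank at most one, it follows that $B$ is identically zero.'' What you add is the verification hidden in that last clause (torsion elements pair to zero because $\Z$ is torsion-free, pass to $H/T\hookrightarrow\Q$, use a relation $q\bar a=p\bar b$ with $q\neq 0$), and your argument there is exactly right. Your treatment of the splitting is also a genuine improvement in care: note that the paper's proof stops at $B\equiv 0$, i.e.\ at abelianness, and your hesitation about the clause $G\cong H\times\Z$ is warranted, since as stated it is false for general abelian $H$ of rank at most one --- take $G=\Z$, $C=2\Z$, $H=\Z/2$ (rank zero), or a non-split abelian extension of $\Q$ by $\Z$, which exists because $\Ext^1_{\Z}(\Q,\Z)\neq 0$ and whose middle group is not abstractly isomorphic to $\Q\times\Z$. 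Your restriction to free $H$ via $\Ext^1_{\Z}(H,\Z)=0$ is the correct fix; the one small inaccuracy is your claim that in the application $H$ is trivial or infinite cyclic --- the abelian subgroup of $\pi_1(M)$ containing the two commuting classes is finitely generated of rank at most one but may have torsion unless $\pi_1(M)$ is torsion-free. This is immaterial, however, because Proposition \ref{baby prop} and Theorem \ref{main thm} only invoke the ``in particular, $G$ is abelian'' conclusion, which your pairing argument (like the paper's) establishes for every abelian $H$ of rank at most one.
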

\begin{proof}
    The commutator subgroup $[G,G]$ is contained in $C=\Z$. The map $G\times G\to \Z$ that sends a couple $(a,b)$ to the commutator $aba^{-1}b^{-1}$ factors to a skew-symmetric bilinear map $B: H\times H\to \Z$. Since $H$ has rank at most one, it follows that $B$ is identically zero. This proves the Lemma.
\end{proof}
\begin{proof}[Proof of Proposition \ref{baby prop}]
The homotopy exact sequence associated to the bundle $V\to M$ reads 
\begin{equation}\label{hom ex seq}
 ...\to\pi_2(V)\to \pi_2(M)\to\pi_1(\S^1)\to \pi_1(V)\to \pi_1(M)\to 0   
\end{equation}
where the connecting homomorphism is given by integrating $\omega$ over spheres in $M$. By the first assumption, the fiber $\gamma_V$ is of infinite order in $\pi_1(V)$. We note also that the image of $\pi_1(\S^1)$ is always central in $\pi_1(V)$, that is $\gamma_V$ commutes with every element of $\pi_1(V)$. In fact, given any loop $\eta$ in $V$ (having the same base point with $\gamma_V$) and its projection $\overline{\eta}$ in $M$, $\gamma_V$ and $\eta$ can be seen as loops in the total space of the pull-back bundle $\overline{\eta}^*V\to \S^1$, which is diffeomorphic to $\T^2$ and therefore has abelian fundamental group. 

Now if $L_\alpha$ is isotopic to $\Id$ and  $\delta$ is a loop in $V$, then by Lemma \ref{L_alpha on pi_1}, $\delta$ is freely homotopic to $\delta\ast \gamma_V^k$. Then there is a loop $\beta$ in $V$ such that $\beta\ast\delta\ast\beta^{-1}=\delta\ast \gamma_V^k$ in $\pi_1(V)$. We write $\overline{\delta}=\pi\circ \delta$ and $\overline{\beta}=\pi\circ \beta$. Then $\overline{\delta}$ commutes with $\overline{\beta}$ in $\pi_1(M)$ and by assumption $\overline{\delta}$ and $\overline{\beta}$ are contained in a subgroup $H$ of rank at most one. Then, by Lemma \ref{central extension}, the subgroup $\pi_*^{-1}(H)$ is abelian. As $\delta$ and $\beta$ are contained in $\pi_*^{-1}(H)$, we conclude that $\gamma_V^k$ is contractible and therefore $k=\int_{\overline{\delta}} \alpha=0$. Since $\delta$ is arbitrary we get $[\alpha]=0$. 
\end{proof}
\begin{remark}\label{flux group vanishes} The above statement shows that the flux group $\Gamma_\omega$ is trivial in this case and $\pi_0(\HH)$, being isomorphic to $H_{dR}^1(M,\Z)$, injects into $\pi_0(\Diff(V))$.
\end{remark}

\begin{example} A typical example for the above proposition is an oriented surface of genus at least two.
\end{example}
Our next objective is to use Proposition \ref{baby prop} as a springboard to describe prequantizations for which the homomorphism \eqref{HH to Cont} is injective for contact topological reasons. We want to replace the role of the Reeb orbit with the loop induced by the Reeb flow in the bundle   $\CSp(\xi)\to V$  of conformally symplectic frames of $\xi$. 

The bundle $\CSp(\xi)\to V$ is a principal $\CSp(2n,\R)-$bundle, where $\CSp(2n,\R)$ denotes the group of conformally symplectic matrices. As the symplectic group $\Sp(2n,\R)$ is a deformation retract of $\CSp(2n,\R)$, it is natural to consider the isomorphism, called the \emph{Maslov index}, given by 
$$\Index: \pi_1(\CSp(2n,\R))=\pi_1(\Sp(2n,\R))\to\pi_1(\U(n))\to\pi_1(\S^1)\cong \Z$$
where the first arrow is given by the polar decomposition and the second arrow is induced by the determinant map. We have the homotopy exact sequence
\begin{equation}\label{hom ex seq frame}
...\to \pi_2(V)\to \pi_1(\CSp(2n,\R))\to\pi_1(\CSp(\xi))\to \pi_1(V)\to 0    
\end{equation}
and it is easy to see that the connecting homomorphism is given by integrating the first Chern class $c_1(\xi)$ over spheres in $V$. Here we view $\xi$ as a complex vector bundle of rank $n$, by choosing a complex structure on $\xi$ compatible with $d\lambda$. We note that the space of such complex structures is non-empty and contractible. In the same vein $c_1(TM)$ is well-defined and we have $c_1(\xi)=\pi^*c_1(TM)$. Combining this with \eqref{hom ex seq frame}, we conclude that the homomorphism 
\begin{equation}\label{pi_1 to p_1}
\pi_1(\CSp(2n,\R))\to\pi_1(\CSp(\xi))    
\end{equation}
is injective if and only if $c_1(TM)$ vanishes on every sphere of vanishing symplectic area in $M$. Note that this is equivalent to assuming that $(M,\omega)$ is \emph{monotone on} $\pi_2(M)$, that is, there is some $\kappa\in\R$ (in fact $\kappa\in\Q$), which we call the \emph{monotonicity constant}, such that $c_1(TM)=\kappa\, \omega$ on $\pi_2(M)$. We also remark that the image of $\pi_1(\CSp(2n,\R))$ in $\pi_1(\CSp(\xi))$ is always central. This follows from the argument given in the proof of Proposition \ref{baby prop} for the bundle $V\to M$ and the fact that $\pi_1(\CSp(2n,\R))$ is abelian. 

Our aim now is to study the loop  in $\CSp(\xi)$ that is associated to the Reeb flow and to determine sufficient conditions such that it is of infinite order in $\pi_1(\CSp(\xi))$. Let $p\in V$ and $\mathcal{F}_p$ be a symplectic frame of $\xi_p$. We define the \emph{Reeb frame loop} in the frame bundle by
\begin{equation}\label{reeb frame loop}
\Gamma_V:\S^1\to \CSp(\xi),\ \ t\mapsto (\phi^X_t)_*\mathcal{F}_p, \end{equation}
which defines an element in $\pi_1(\CSp(\xi),\mathcal{F}_p)$. Next we assume that the Reeb orbit $\gamma_V$ is contractible. Let $j:\D\to V$ be a capping disc, that is $j=\gamma_V$ on $\partial \D$. Let $\mathcal{F}$ be a symplectic framing of $j^*\xi$.  Writing $\Gamma_V$ with respect to the framing $\mathcal{F}|_{\partial \D}$, we get the loop  $$\Gamma_V^j:\S^1\to \CSp(2n,\R)$$ 
to which we can associate its Maslov index.
\begin{lemma}\label{index formula} Let $\overline{j}:S^2:=\D/\partial \D\to M$ be the sphere given by $\overline{j}=\pi\circ j$. Then we have
$$\Index(\Gamma_V^j)=-\langle c_1(\overline{j}^*TM),S^2\rangle=-\langle c_1(TM),\overline{j}(S^2)\rangle.$$    
In particular, $\Index(\Gamma_V^j)$ does not depend on $\mathcal{F}$.
Moreover, $\Index(\Gamma_V^j)$ is independent of the capping disc if and only if $(M,\omega)$ is monotone on $\pi_2(M)$. 
\end{lemma}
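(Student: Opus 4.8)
The plan is to prove the index formula by trivializing the relevant bundles and recognizing $\Gamma_V^j$ as a clutching function. First I would record the isomorphism $\xi\cong\pi^*TM$ of complex vector bundles: choosing an $\omega$-compatible almost complex structure $J_M$ on $M$ and transporting it to $\xi$ through the fiberwise isomorphism $d\pi\colon\xi_q\to T_{\pi(q)}M$ (an isomorphism because $\ker d\pi=\R X$ and $\lambda(X)=1$), the map $d\pi$ becomes complex-linear, and the resulting Chern-class computations are insensitive to the choice of $J_M$ since the space of compatible structures is contractible. Pulling back along $j$ gives $j^*\xi\cong\overline{j}^*TM$. Since $\overline{j}|_{\partial\D}=\pi\circ\gamma_V$ is constant (the Reeb orbit sits in one fiber), $\overline{j}$ descends through the quotient $q\colon\D\to\D/\partial\D=S^2$ to a map $\overline{j}_{S^2}\colon S^2\to M$, so with $E:=\overline{j}_{S^2}^*TM$ one has $j^*\xi\cong q^*E$ and $\langle c_1(E),S^2\rangle=\langle c_1(TM),\overline{j}(S^2)\rangle$; this also records the (trivial, by naturality of $c_1$) equality of the two right-hand sides in the statement.

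The key dynamical observation is that $\pi\circ\phi^X_t=\pi$, hence $d\pi\circ(\phi^X_t)_*=d\pi$ on $\xi_p$. Therefore, viewed in $E$ via $d\pi$, the Reeb frame loop $t\mapsto(\phi^X_t)_*\mathcal{F}_p$ is the \emph{constant} frame $d\pi(\mathcal{F}_p)$ of the fiber $E_{\mathrm{north}}=T_{\pi(p)}M$ over the collapsed point. On the other hand, since $q$ restricts to a diffeomorphism on the interior, the framing $\mathcal{F}$ of $j^*\xi$ descends to a framing of $E$ over $S^2\setminus\{\mathrm{north}\}$. By definition $\Gamma_V^j$ is exactly the transition, along the equator $\partial\D$, between the descended framing $\mathcal{F}$ and the constant Reeb framing near the north pole; that is, up to orientation convention $\Gamma_V^j$ is the clutching function presenting $E$ as a trivial bundle over $S^2\setminus\{\mathrm{north}\}$ glued to a trivial bundle near the north pole. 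As the Maslov index of a loop in $\CSp(2n,\R)$ is the winding number of the complex determinant of its unitary part, and this winding number computes $\pm c_1(E)$, fixing the orientation bookkeeping yields $\Index(\Gamma_V^j)=-\langle c_1(E),S^2\rangle=-\langle c_1(TM),\overline{j}(S^2)\rangle$. Independence of $\mathcal{F}$ is then immediate from the formula; alternatively, replacing $\mathcal{F}$ by another framing multiplies $\Gamma_V^j$ by a loop that extends over the contractible disc $\D$, hence is null-homotopic and Maslov-trivial.

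For the last assertion I would compare two capping discs $j_0,j_1$ of the same Reeb orbit $\gamma_V$. Gluing $j_0$ to the reversal of $j_1$ along $\gamma_V$ produces a sphere $S$ in $V$, and $A:=\pi_*[S]=\overline{j}_0-\overline{j}_1\in\pi_2(M)$. By the formula just proved, $\Index(\Gamma_V^{j_0})-\Index(\Gamma_V^{j_1})=-\langle c_1(TM),A\rangle$. Since $A$ lifts to $V$, the homotopy exact sequence \eqref{hom ex seq}, whose connecting homomorphism integrates $\omega$, forces $\langle[\omega],A\rangle=0$; conversely every class in $\pi_2(M)$ of zero symplectic area arises in this way, being the image of some element of $\pi_2(V)$. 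Hence $\Index(\Gamma_V^j)$ is independent of the capping disc if and only if $\langle c_1(TM),A\rangle=0$ for every $A\in\pi_2(M)$ with $\langle[\omega],A\rangle=0$, which is precisely the condition that $c_1(TM)$ be proportional to $\omega$ on $\pi_2(M)$, i.e.\ that $(M,\omega)$ be monotone on $\pi_2(M)$.

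The hard part will be the sign and orientation bookkeeping in the second step: matching the orientation of $\partial\D$, choosing which region is trivialized by which framing, and the sign convention built into $\Index$, so that the clutching degree comes out as $-c_1(E)$ rather than $+c_1(E)$. I would settle this once in a standard model, for instance the Hopf prequantization $\S^3\to\C P^1$ where $\langle c_1(TM),[\C P^1]\rangle=2$, after which the general case follows by naturality of all the constructions involved.
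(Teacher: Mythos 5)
Your proposal is correct and follows essentially the same route as the paper: both interpret $\Gamma_V^j$ as the clutching function presenting $\overline{j}^*TM$ over $S^2$ (your observation that $d\pi\circ(\phi^X_t)_*=d\pi$ makes the Reeb frame loop constant downstairs is exactly what the paper encodes by lifting a framing $\overline{\mathcal{G}}$ of $\overline{j}^*TM$ near the collapsed point to a Reeb-invariant framing $\mathcal{G}$ of $j^*\xi$ over a collar), and both settle the last claim by gluing two capping discs into a sphere in $V$ and using the homotopy exact sequence to match lifted spheres with the zero-symplectic-area classes in $\pi_2(M)$. The only real difference is presentational: the paper derives the sign directly from the relation $\Index(\overline{\mathcal{F}}|_{\partial\overline{U}})=-\Index(\overline{\mathcal{G}}|_{\partial\overline{U}})$ between the two framings, whereas you defer the orientation bookkeeping to a calibration in the Hopf example $\S^3\to\C P^1$, which is a legitimate shortcut.
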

\begin{proof} Let $x\in S^2$ denote the image of $\partial \D$ in the quotient. We take a collar neighbourhood $U$ of $\partial \D$ in $\D$ and let $\overline{U}$ be the corresponding disc neighbourhood of $x\in S^2$. We pick a symplectic framing $\overline{\mathcal{G}}$ of $\overline{j}^*TM|_{\overline{U}}$, say extending $\pi_*\mathcal{F}_p$, and lift it to a Reeb invariant framing $\mathcal{G}$ of $j^*\xi|_{U}$ extending $\mathcal{F}_p$. Note that this is possible since $\overline{U}$ is contractible. On the other hand  $\mathcal{F}$ descends to a symplectic framing $\overline{\mathcal{F}}$ of $\overline{j}^*TM|_{S^2\setminus \{x\}}$. Viewing $c_1(\overline{j}^*TM)$ as the homotopy class of the clutching function that is used to glue the trivial bundles $(\overline{j}^*TM|_{S^2\setminus \overline{U}}, \overline{\mathcal{F}})$ and $(\overline{j}^*TM|_{\overline{U}}, \overline{\mathcal{G}})$ along $\partial\overline{U}$, we get
$$\langle c_1(\overline{j}^*TM),S^2\rangle= \Index(\overline{\mathcal{F}}|_{\partial\overline{U}})=-\Index(\overline{\mathcal{G}}|_{\partial\overline{U}})$$
where we view $\overline{\mathcal{F}}|_{\partial\overline{U}}$, respectively $\overline{\mathcal{G}}|_{\partial\overline{U}}$,
as a loop in $\CSp(2n,\R)$ using the framing $\overline{\mathcal{G}}|_{\partial\overline{U}}$, respectively $\overline{\mathcal{F}}|_{\partial\overline{U}}$. Using the symplectic isomorphism $\pi_*$, we get 
$$\langle c_1(\overline{j}^*TM),S^2\rangle=-\Index({\mathcal{G}}|_{\eta})$$
where $\eta$ denotes the preimage of $\partial \overline{U}$ in $\D$ and the loop ${\mathcal{G}}|_{\eta}$ is obtained via the framing $\mathcal{F}$. Since ${\mathcal{G}}|_{\eta}$ is homotopic to the loop ${\mathcal{G}}|_{\partial \D}$, which is defined via the framing $\mathcal{F}$, and since $\Gamma^j_V$ reads as the constant loop with respect to the framing $\mathcal{G}|_{\partial \D}$, we get
$$\langle c_1(\overline{j}^*TM),S^2\rangle=-\Index({\mathcal{G}}|_{\partial \D})=-\Index(\Gamma_V^j),$$
where the left hand side is clearly independent of the framing $\mathcal{F}$. 

Recall that $(M,\omega)$ is monotone on $\pi_2(M)$ if and only if $c_1(\xi)$ vanishes on $\pi_2(V)$.
 Now given two capping discs $j_1,j_2:\D\to V$, we glue them together, say after reversing the orientation of the second, to get a sphere $j:S^2\to V$. We have 
$$\Index(\Gamma_V^{j_1})-\Index(\Gamma_V^{j_2})=\langle c_1(j^*\xi),S^2\rangle.$$
Hence $\Index(\Gamma_V^{j_1})=\Index(\Gamma_V^{j_2})$ if $c_1(\xi)$ vanishes on $\pi_2(V)$. On the other hand given a sphere $S$ in $V$, one can choose some capping disc $j_1:\D\to V$ with $j_1(0)\in 
 S$ and define the capping disc $j_2$ to be the concatenation of $j_1$ and $S$. Repeating the above construction for these two capping discs leads to a sphere $j$ in the homotopy class of $S$. Then the above formula says that $c_1(\xi)$ vanishes on $S$ if the index is independent of the capping discs.      

\end{proof}
\begin{remark} The above construction of the Maslov index can be carried out also in the case where $\gamma_V$ is null homologous. In this case the capping disc is replaced by a capping surface $j:\Sigma\to V$ and the relative index is given by evaluating $c_1(TM)$ over the closed surface $\overline{j}:\overline{\Sigma}:=\Sigma/\partial\Sigma\to M$. 
\end{remark}
An immediate consequence of the above lemma is the following. 
\begin{proposition}\label{reeb loop frame inf order} Let $(V,\lambda)\to (M,\omega)$ be a prequantization. Suppose that either $[\omega]$ vanishes on $\pi_2(M)$ or $(M,\omega)$ is monotone on $\pi_2(M)$ with non-vanishing monotonicity constant. Then the Reeb frame loop $\Gamma_V$ is of infinite order in $\pi_1(\CSp(\xi))$.  
\end{proposition}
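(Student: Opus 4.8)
The plan is to exploit the homotopy exact sequence \eqref{hom ex seq frame} together with the observation that the Reeb frame loop projects to the Reeb orbit. Writing $p:\CSp(\xi)\to V$ for the bundle projection, the frame $\Gamma_V(t)=(\phi^X_t)_*\mathcal{F}_p$ sits over the point $\phi^X_t(p)=\gamma_V(t)$, and since $\phi^X_1=\Id$ the assignment $\Gamma_V$ is genuinely closed; hence $p_*[\Gamma_V^N]=[\gamma_V^N]$ in $\pi_1(V)$ for every $N\geq 1$. I would then split into the two hypotheses of the statement.

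Suppose first that $[\omega]$ vanishes on $\pi_2(M)$. Then the connecting homomorphism $\pi_2(M)\to\pi_1(\S^1)$ of \eqref{hom ex seq}, which integrates $\omega$ over spheres, is zero, so $[\gamma_V]$ is of infinite order in $\pi_1(V)$ (exactly as in the proof of Proposition \ref{baby prop}). Consequently $p_*[\Gamma_V^N]=[\gamma_V^N]\neq e$ for all $N\geq 1$, and $[\Gamma_V]$ is of infinite order already on the level of the base. No real difficulty arises in this case.

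Now suppose $(M,\omega)$ is monotone on $\pi_2(M)$ with constant $\kappa\neq 0$, and argue by contradiction: assume $\Gamma_V^N=e$ in $\pi_1(\CSp(\xi))$ for some $N\geq 1$. Projecting, $\gamma_V^N$ is contractible, so it bounds a capping disc $j:\D\to V$; the loop $\Gamma_V^N$ is precisely the Reeb frame loop of the $N$-fold orbit, so the argument of Lemma \ref{index formula} applies verbatim and gives $\Index((\Gamma_V^N)^j)=-\langle c_1(TM),\bar{j}(S^2)\rangle$ with $\bar{j}=\pi\circ j$. A Stokes computation using $d\lambda=\pi^*\omega$ and $\lambda(X)=1$ shows $\int_{S^2}\bar{j}^*\omega=\int_{\gamma_V^N}\lambda=N$, independently of $j$, so monotonicity yields $\langle c_1(TM),\bar{j}(S^2)\rangle=\kappa N$ and therefore $\Index((\Gamma_V^N)^j)=-\kappa N\neq 0$. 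On the other hand, the capping disc identifies $[\Gamma_V^N]$ with the image $i_*[(\Gamma_V^N)^j]$ of the fibre class under \eqref{pi_1 to p_1}; since monotonicity makes \eqref{pi_1 to p_1} injective and the nonvanishing index forces $[(\Gamma_V^N)^j]\neq 0$, we obtain $[\Gamma_V^N]\neq e$, contradicting the assumption. Hence $[\Gamma_V]$ is of infinite order.

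The step I expect to be the main obstacle is the identity $[\Gamma_V^N]=i_*[(\Gamma_V^N)^j]$, namely the assertion that once the base loop $\gamma_V^N$ is capped, the class of $\Gamma_V^N$ in $\pi_1(\CSp(\xi))$ is exactly the $i_*$-image of its vertical part $[(\Gamma_V^N)^j]\in\pi_1(\CSp(2n,\R))$. I would establish it by using the section $s$ of $\CSp(\xi)$ over $j(\D)$ determined by the framing $\mathcal{F}$: the loop $s|_{\partial\D}$ bounds the disc $s(\D)$ and is thus nullhomotopic, while $\Gamma_V^N$ differs from $s|_{\partial\D}$ fibrewise by the $\CSp(2n,\R)$-valued loop $(\Gamma_V^N)^j$, which gives the identity. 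The remaining ingredients, namely the area computation, the index formula, and the injectivity of \eqref{pi_1 to p_1}, are already available from Lemma \ref{index formula} and the monotonicity dictionary recorded before \eqref{reeb frame loop}.
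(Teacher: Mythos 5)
Your proposal is correct and takes essentially the same route as the paper: the $[\omega]|_{\pi_2}=0$ case via projecting $\Gamma_V$ to $\gamma_V$, and the monotone case via Lemma \ref{index formula}, the Stokes computation $\int_{S^2}\overline{j}^*\omega=N$, and the injectivity of \eqref{pi_1 to p_1}, with your section-based justification of $[\Gamma_V^N]=i_*[(\Gamma_V^N)^j]$ being exactly the ``homotope $\Gamma_V$ into the fibre with class $\Index(\Gamma_V^j)$'' step the paper uses. The only cosmetic difference is that you run a single contradiction argument for an arbitrary power $N$, whereas the paper first treats a contractible Reeb orbit and then applies the same argument to $\Gamma_V^d$ when $\gamma_V$ has torsion order $d$.
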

\begin{proof} Notice that if $[\omega]$ is trivial on $\pi_2(M)$ then the Reeb orbit is of infinite order in $\pi_1(V)$. Since $\Gamma_V$ projects to $\gamma_V$ the statement follows.  

Now assume that there is sphere in $M$ with unit symplectic area. Then $\gamma_V$ is contractible.
Given a capping disc $j:\D\to V$ with $p=j(1)$, with a framing $\mathcal{F}$ of $j^*\xi$ extending $\mathcal{F}_p$, the loop $\Gamma_V$, based at $\mathcal{F}_p$ is homotopic to a loop in $\CSp(\xi_p)$ based at $\mathcal{F}_p$, whose homotopy class is given by $\Index(\Gamma_V^j)$ after $(\CSp(\xi_p),\mathcal{F}_p)$ is identified by $(\CSp(2n,\R),\Id)$. By monotonicity, the homomorphism \eqref{pi_1 to p_1} is injective. Hence $\Gamma_V$ is of infinite order in $\pi_1(\CSp(\xi),\mathcal{F}_p)$ if and only if $\Index(\Gamma_V^j)$ is non-zero. 

On the other hand,
$$1=\int_{\gamma_V}\lambda=\int_{\D}j^*d\lambda=\int_{S^2}\overline{j}^*\omega.$$
Since the monotonicity constant is assumed to be non-zero, $\Index(\Gamma_V^j)$ does not vanish.

Finally, if $\gamma_V$ is torsion, say of order $d$, in $\pi_1(V)$, then we consider the loop $\Gamma_V^d$ in $\pi_1(\CSp(\xi))$ and apply the Maslov index construction to $\Gamma_V^d$. The argument above applies word by word to $\Gamma_V^d$ and we conclude that $\Gamma_V^d$ and consequently $\Gamma_V$ is of infinite order.    
\end{proof}    
Given $p\in V$ and a symplectic frame $\mathcal{F}_p$ of $\xi_p$, we define a map
$$\Cont(V,\xi)\to \CSp(\xi),\ \ \varphi\mapsto \varphi_*\mathcal{F}_p$$
and get the induced homomorphisms $\pi_1(\Cont(V,\xi),\Id)\to \pi_1(\CSp(\xi),\mathcal{F}_p)$ which maps the homotopy class of the Reeb loop $(\phi^X_t)_{t\in\S^1}$ to the homotopy class of $\Gamma_V$. Hence the Reeb loop is of infinite order in $\pi_1(\Cont(V,\xi),\Id)$ if $\Gamma_V$ is of infinite order in $\pi_1(\CSp(\xi),\mathcal{F}_p)$. Hence we get the following corollary.
\begin{corollary} \label{reeb loop cont inf order} Let $(V,\lambda)\to (M,\omega)$ be a prequantization. Suppose that either $[\omega]$ vanishes on $\pi_2(M)$ or $(M,\omega)$ is monotone on $\pi_2(M)$ with non-vanishing monotonicity constant. Then the Reeb loop $(\phi^X_t)_{t\in\S^1}$ is of infinite order in $\pi_1(\Cont(V,\xi))$.    
\end{corollary}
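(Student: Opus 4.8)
The plan is to exploit an evaluation-at-a-frame map that turns a contactomorphism into a conformally symplectic frame, thereby transporting the problem from $\pi_1(\Cont(V,\xi))$ to $\pi_1(\CSp(\xi))$, where Proposition \ref{reeb loop frame inf order} has already done the essential work. Concretely, fix $p\in V$ and a symplectic frame $\mathcal{F}_p$ of $\xi_p$, and consider the map
$$\Cont(V,\xi)\to \CSp(\xi),\qquad \varphi\mapsto \varphi_*\mathcal{F}_p.$$
This is well-defined and continuous: every co-orientation preserving contactomorphism preserves $\xi$ and acts on it conformally symplectically, so $\varphi_*\mathcal{F}_p$ is again a conformally symplectic frame, lying in the fibre over $\varphi(p)$. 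Passing to fundamental groups with basepoints $\Id$ and $\mathcal{F}_p$ yields a homomorphism
$$\pi_1(\Cont(V,\xi),\Id)\to \pi_1(\CSp(\xi),\mathcal{F}_p).$$

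First I would identify the image of the Reeb loop under this homomorphism. By construction the loop $(\phi^X_t)_{t\in\S^1}$ in $\Cont(V,\xi)$ is sent to the loop $t\mapsto (\phi^X_t)_*\mathcal{F}_p$ in $\CSp(\xi)$, which is precisely the Reeb frame loop $\Gamma_V$ of \eqref{reeb frame loop}. Hence the homomorphism carries the class of the Reeb loop to the class of $\Gamma_V$.

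Next I would invoke Proposition \ref{reeb loop frame inf order}: under either of the two stated hypotheses (that $[\omega]$ vanishes on $\pi_2(M)$, or that $(M,\omega)$ is monotone on $\pi_2(M)$ with non-vanishing monotonicity constant), the class of $\Gamma_V$ has infinite order in $\pi_1(\CSp(\xi),\mathcal{F}_p)$. Since a group homomorphism cannot send an element of finite order to one of infinite order, the class of the Reeb loop must itself be of infinite order in $\pi_1(\Cont(V,\xi),\Id)$, which is exactly the assertion.

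I do not expect a genuine obstacle here, as this is a purely formal functoriality argument once Proposition \ref{reeb loop frame inf order} is available. The only points requiring a little care are the well-definedness and continuity of the evaluation map and the bookkeeping of basepoints (the loop is based at $\Id$, its image at $\mathcal{F}_p$), but both are routine. The real content of the statement lives entirely in Proposition \ref{reeb loop frame inf order}, and this corollary is simply its transport along the frame evaluation map.
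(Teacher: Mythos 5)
Your proposal is correct and coincides with the paper's own argument: the paper likewise evaluates at a fixed symplectic frame $\mathcal{F}_p$ to obtain the map $\Cont(V,\xi)\to \CSp(\xi)$, $\varphi\mapsto \varphi_*\mathcal{F}_p$, notes the induced homomorphism on $\pi_1$ sends the Reeb loop to the Reeb frame loop $\Gamma_V$, and concludes via Proposition \ref{reeb loop frame inf order}. No gaps; this is exactly the intended functoriality argument.
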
  

After establishing the setting in which $\Gamma_V$ is suitable to take the role of $\gamma_V$ in the proof of Theorem \ref{baby prop}, we are ready to prove the lemma that is analogous to Lemma \ref{L_alpha on pi_1}. 
\begin{lemma}\label{L_alpha on pi_1 frame}  Let $\widehat{\delta}:\S^1\to \CSp(\xi)$ be a loop with its projection $\delta:\S^1\to V$. Then $(L_\alpha)_*\circ \widehat{\delta}$ is freely homotopic to the loop $\widehat{\delta}\ast \Gamma_V^k$ where 
$\Gamma_V:\S^1\to \CSp(\xi)$ is the Reeb frame loop based at $\widehat{\delta}(0)$ and $k=\int_\delta \pi^*\alpha$.    
\end{lemma}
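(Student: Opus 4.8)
The plan is to transport the proof of Lemma \ref{L_alpha on pi_1} into the frame bundle $\CSp(\xi)$, with the Reeb orbit $\gamma_V$ replaced by the Reeb frame loop $\Gamma_V$. The new difficulty is that the two maps building $L_\alpha$, the horizontal flow $\varphi_t$ and the fibre rotation $\phi^X_{f\circ\pi}$, do \emph{not} preserve the contact distribution $\xi$: one has $\varphi_t^*\lambda=\lambda-t\,\pi^*\alpha$ and $(\phi^X_{f\circ\pi})^*\lambda=\lambda+\pi^*\alpha$, so their differentials do not send $\xi$-frames to $\xi$-frames and cannot be applied directly to $\widehat\delta$. To circumvent this I would use two genuine frame transports inside $\CSp(\xi)$. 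First, the Reeb flow is strict contact, so $\mathcal{R}_c:=(\phi^X_c)_*$ maps $\CSp(\xi)_p$ to $\CSp(\xi)_{\phi^X_c(p)}$; by construction $c\mapsto\mathcal{R}_c(\mathcal{F}_p)$ is exactly $\Gamma_V$, and $\mathcal{R}_1=\Id$. Second, since $\varphi_t^*d\lambda=d\lambda$ each $\varphi_t$ is a symplectomorphism of $(V,d\lambda)$; composing $d\varphi_t$ with the projection $P\colon v\mapsto v-\lambda(v)X$ along the Reeb field (which restricts to a $d\lambda$-symplectic isomorphism $(\varphi_t)_*\xi\to\xi$, because $\imath_X d\lambda=0$) gives a continuous family of symplectic transports $\mathcal{S}_t:=P\circ d\varphi_t|_\xi\colon\CSp(\xi)_{\delta(s)}\to\CSp(\xi)_{\varphi_t(\delta(s))}$ with $\mathcal{S}_0=\Id$.

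The first step pushes $(L_\alpha)_*\circ\widehat\delta$ into the torus $\overline{\delta}^*V$. Expanding the differential of $L_\alpha=\phi^X_{f\circ\pi}\circ\varphi_1$ along $\xi$ and discarding the Reeb-direction shear coming from differentiating the function $f\circ\pi$ (which is killed by $P$) yields the factorization
\begin{equation*}
(L_\alpha)_*\,\mathcal{F}=\mathcal{R}_{f(\pi(\varphi_1(p)))}\bigl(\mathcal{S}_1\mathcal{F}\bigr),\qquad \mathcal{F}\in\CSp(\xi)_p .
\end{equation*}
I would then set $H_1(s,\tau):=\mathcal{R}_{f(\overline{\varphi}_\tau(\overline{\delta}(s)))}\bigl(\mathcal{S}_\tau\widehat\delta(s)\bigr)$ for $\tau\in[0,1]$. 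Both operations land in $\CSp(\xi)$, the formula closes up in $s$ since $\overline{\delta}(0)=\overline{\delta}(1)$ and the Reeb angle is genuinely $\S^1$-valued, and $H_1(\cdot,1)=(L_\alpha)_*\circ\widehat\delta$ by the factorization. Hence $(L_\alpha)_*\circ\widehat\delta$ is freely homotopic to the loop $\mu(s):=\mathcal{R}_{f(\overline{\delta}(s))}\widehat\delta(s)=H_1(s,0)$, which lies over the torus and covers $\phi^X_{f\circ\pi}\circ\delta$.

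The second step mirrors the torus computation in Lemma \ref{L_alpha on pi_1}. Lifting $f\circ\overline{\delta}\colon\S^1\to\S^1$ to $\widetilde g\colon[0,1]\to\R$ with $\widetilde g(0)=0$ and $\widetilde g(1)=k$, where $k=\int_\delta\pi^*\alpha$ is the winding number, I set $H_2(s,r):=\mathcal{R}_{r\widetilde g(s)}\widehat\delta(s)$. This is a homotopy in $\CSp(\xi)$ whose bottom edge is $\widehat\delta$, whose top edge is $\mu$, whose left edge is constant, and whose right edge $r\mapsto\mathcal{R}_{rk}\widehat\delta(0)$ is precisely $\Gamma_V^k$. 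The standard square argument, using that the boundary $\text{bottom}\ast\text{right}\ast\text{top}^{-1}$ is nullhomotopic while the left edge is constant, gives $\mu\simeq\widehat\delta\ast\Gamma_V^k$; combined with the first step this proves the lemma. I expect the crux to be Step 1: producing frame transports that stay inside $\CSp(\xi)$ even though $\varphi_t$ and $\phi^X_{f\circ\pi}$ are not contactomorphisms, and verifying that $\mathcal{R}_{f\circ\pi\circ\varphi_1}\circ\mathcal{S}_1$ indeed reproduces $(L_\alpha)_*$. Once that factorization is established, the seam winding $k$ in Step 2 is forced by $\int_\delta\pi^*\alpha$, exactly as in the unframed case.
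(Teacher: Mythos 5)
Your proof is correct, and its first half is in essence the paper's argument: the paper also repairs the fact that $\varphi_t$ and $\phi^X_{f\circ\pi}$ fail to preserve $\xi$ by projecting along the Reeb field, via the homotopy $\sigma(s,t)=P_t\circ(\varphi_t)_*\circ(\phi^X_{f\circ\pi})_*\circ\widehat{\delta}(s)$ through the family $\xi_t=\ker(\lambda+(t-1)\pi^*\alpha)$, whose $t=0$ end is exactly your $\mu(s)=\mathcal{R}_{f(\overline{\delta}(s))}\widehat{\delta}(s)$ after the projection kills the shear term $\alpha(\pi_*u)X$; your factorization $(L_\alpha)_*=\mathcal{R}_{f\circ\pi\circ\varphi_1}\circ\mathcal{S}_1$ is the same computation packaged at the endpoint rather than along the path, and it does hold, since $\lambda(d\varphi_1u)=-\alpha(\pi_*u)$ and $\overline{\varphi}_1^*\alpha=\alpha$ make the Reeb components cancel (a simplification you did not use: $d(f\circ\pi)(X_h)=\alpha(X_\alpha)=0$, so the transported angle $f(\overline{\varphi}_\tau(\overline{\delta}(s)))$ is actually independent of $\tau$). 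Where you genuinely diverge is the second half: the paper pulls everything back to $\overline{\delta}^*V\cong\T^2$, chooses an $\S^1$-invariant symplectic trivialization of $\Delta^*\xi$, identifies $\CSp(\Delta^*\xi)\cong\T^2\times\CSp(2n,\R)$, and reads off the free homotopy to $\widehat{\delta}\ast\Gamma_V^k$ in those coordinates, exactly as in Lemma \ref{L_alpha on pi_1}; you instead run the intrinsic square homotopy $H_2(s,r)=\mathcal{R}_{r\widetilde{g}(s)}\widehat{\delta}(s)$ and the standard boundary-of-a-square identity, which is shorter and avoids choosing any trivialization, at the price of losing the explicit coordinates the paper reuses from the unframed case. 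One small repair in your Step 2: you cannot in general normalize $\widetilde{g}(0)=0$, since $f(\overline{\delta}(0))$ need not vanish in $\S^1$; taking $\widetilde{g}(0)=c_0$ with $c_0$ representing $f(\overline{\delta}(0))$ and $\widetilde{g}(1)=c_0+k$, the left and right edges of the square become Reeb-frame arcs of lengths $c_0$ and $c_0+k$, and the square identity gives $\mu\simeq\overline{\ell}\ast\widehat{\delta}\ast\rho$, which as a free loop is $\widehat{\delta}\ast\rho\ast\overline{\ell}\simeq\widehat{\delta}\ast\Gamma_V^k$ because the concatenated arc runs forward by $c_0+k$ and back by $c_0$; this is cosmetic and does not affect the validity of your argument.
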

\begin{proof} Recall that $L_\alpha=\varphi_1\circ \phi^X_{f\circ \pi}$ where, $\alpha=df$ for some $f:M\to\S^1$ and $\varphi_1$ is the time-one-map of the flow $(\varphi_t)$ of the horizontal vector field $X_h$. From \eqref{pullback by hor. flow} it follows that 
$$(\varphi_t)_*:(\xi_0,d\lambda)\to (\xi_t,d\lambda),\ \ t\in [0,1]$$
is a smooth path of isomorphisms of rotating contact structures
$$\xi_t:=\ker (\lambda+(t-1)\pi^*\alpha),\ \ t\in [0,1],$$
seen as symplectic vector bundles. Note that $\xi_1=\ker \lambda= \xi$ and by \eqref{GId on P} we have that 
$$(\phi^X_{f\circ \pi})_*:(\xi_1,d\lambda)\to (\xi_0,d\lambda)$$
is also an isomorphism. Notice that the vector field $X$ is the Reeb vector field for every contact form $\lambda+(t-1)\ \pi^*\alpha$ and by projecting along $X$, we get a smooth path
$$P_t:(\xi_t,d\lambda)\to (\xi_1,d\lambda)$$
of isomorphisms of symplectic vector bundles. Observe that $P_1$ is the identity map. Now  we define a path of loops 
$$\sigma: \S^1\times [0,1]\to \CSp(\xi),\ \ (s,t)\mapsto P_t\circ (\varphi_t)_*\circ (\phi^X_{f\circ \pi})_*\circ \widehat{\delta}(s),$$
which defines a free homotopy between $\sigma(\cdot,0)=P_0\circ (\phi^X_{f\circ \pi})_*\circ\widehat{\delta}$ and $\sigma(\cdot,1)=(L_\alpha)_*\circ \widehat{\delta}$. 

We claim that $P_0\circ (\phi^X_{f\circ \pi})_*\circ\widehat{\delta}$ is freely homotopic to $\widehat{\delta}\ast \Gamma_V^k$ in $\CSp(\xi)$. To see this, we write $\overline{\delta}:=\pi\circ \delta$ and consider the pull-back bundle $\overline{\delta}^*V\to \S^1$. Let $\Delta: \overline{\delta}^*V\to V$ be the bundle map covering $\overline{\delta}$. Next we take a symplectic trivialization of $\overline{\delta}^*TM$ and lift it to an $\S^1-$invariant trivialization of the pull-back bundle $\Delta^*\xi\to \overline{\delta}^*V$. Pairing this trivialization with the trivialization of $\overline{\delta}^*V\to \S^1$ described in the proof of Lemma \ref{L_alpha on pi_1}, we identify $\CSp(\Delta^*\xi)$ with $\T^2\times \CSp(2n,\R)$ in such a way that $\widehat{\delta}$ reads as $(\S^1\times \{0\})$ in $\T^2$ paired with a loop $\eta$ in $\CSp(2n,\R)$ based at $\Id$ and $\Gamma_V^k$ reads as $(\{0\}\times \S^1)^k$ in $\T^2$ paired with the constant loop $\Id$ in $\CSp(2n,\R)$ since $\Delta^*\xi$ is trivialized in an $\S^1-$invariant fashion. Hence $\widehat{\delta}\ast \Gamma_V^k$ reads as $(\S^1\times \{0\})\ast (\{0\}\times \S^1)^k$ paired with $\eta$. On the other hand since $P_0$ is given by the projection along $X$, for any $s\in \S^1$ and $u\in \widehat{\delta}(s)$, we have
\begin{equation*}
P_0\circ (\phi^X_{f\circ \pi})_*u=P_0\left(d(\phi^X_{f(\overline{\delta}(s))})_{\delta(s)}\ [u]+\alpha_{\overline{\delta}(s)}(\pi_*u)X\right)=d(\phi^X_{f(\overline{\delta}(s))})_{\delta(s)}\ [u].    
\end{equation*}
Again since  $\Delta^*\xi$ is trivialized in an $\S^1-$invariant fashion, $P_0\circ (\phi^X_{f\circ \pi})_*\circ\widehat{\delta}$ reads
$$s\mapsto\left((s,f(s)),\eta(s)\right)\in \T^2\times \CSp(2n,\R),$$
where $f:\S^1\to \S^1$ is of degree $k$.
Hence $P_0\circ (\phi^X_{f\circ \pi})_*\circ\widehat{\delta}$ is freely homotopic to $\widehat{\delta}\ast \Gamma_V^k$ in $\T^2\times \CSp(2n,\R).$ 
\end{proof}
Now we are ready to state our first main result. 
\begin{theorem}\label{main thm}
Let $(M,\omega)$ be a closed integral symplectic manifold such that either $[\omega]$ vanishes on $\pi_2(M)$ or $(M,\omega)$ is monotone on $\pi_2(M)$ with a non-vanishing monotonicity constant. Suppose that any abelian subgroup of $\pi_1(M)$ has rank at most one. If $(V,\lambda)\to (M,\omega)$ is a prequantization, then for any closed integral 1-form $\alpha$ on $M$, $L_\alpha$ is isotopic to $\Id$ in $\Cont(V,\xi)$ if and only if $[\alpha]$ is trivial in $H_{dR}^1(M,\Z)$.
\end{theorem}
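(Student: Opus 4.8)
The plan is to mimic the topological argument of Proposition \ref{baby prop}, but lifted one level up to the frame bundle $\CSp(\xi)$, replacing the Reeb orbit $\gamma_V$ with the Reeb frame loop $\Gamma_V$ and replacing the role of Lemma \ref{L_alpha on pi_1} with Lemma \ref{L_alpha on pi_1 frame}. The key new ingredient is that $\Gamma_V$ is of infinite order in $\pi_1(\CSp(\xi))$ under the stated hypotheses (Proposition \ref{reeb loop frame inf order}), which is what forces the iteration count $k$ to vanish in the relevant situation.

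Let me outline the direction of the argument. The nontrivial implication is: if $L_\alpha$ is contact isotopic to $\Id$, then $[\alpha]=0$ in $H^1_{dR}(M,\Z)$. So I would suppose $L_\alpha$ is isotopic to $\Id$ through contactomorphisms. A contact isotopy from $L_\alpha$ to $\Id$ induces, on the frame bundle, a free homotopy between the pushforward loop $(L_\alpha)_*\circ\widehat\delta$ and $\widehat\delta$ for any loop $\widehat\delta$ in $\CSp(\xi)$: indeed, a path $\varphi_s$ in $\Cont(V,\xi)$ from $L_\alpha$ to $\Id$ gives the homotopy $(s,t)\mapsto (\varphi_s)_*\widehat\delta(t)$, using that each $\varphi_s$ acts on $\CSp(\xi)$ by its differential. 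Combining this with Lemma \ref{L_alpha on pi_1 frame}, I conclude that $\widehat\delta$ is freely homotopic to $\widehat\delta\ast\Gamma_V^k$ in $\CSp(\xi)$, where $k=\int_\delta\pi^*\alpha=\langle[\alpha],[\overline\delta]\rangle$.

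The heart of the matter is then to extract $k=0$ from this free homotopy. Following the proof of Proposition \ref{baby prop}, the free homotopy means there is a loop $\widehat\beta$ in $\CSp(\xi)$ with $\widehat\beta\ast\widehat\delta\ast\widehat\beta^{-1}=\widehat\delta\ast\Gamma_V^k$ in $\pi_1(\CSp(\xi))$. Projecting to $\pi_1(V)$, the classes $\delta$ and $\beta$ project to elements $\overline\delta,\overline\beta\in\pi_1(M)$ that commute (since $\Gamma_V^k$ projects to $\gamma_V^k$, which is central, and the commutator relation descends). By the rank-at-most-one hypothesis, $\overline\delta$ and $\overline\beta$ lie in a common abelian subgroup $H\subseteq\pi_1(M)$ of rank $\le 1$. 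I then want to apply Lemma \ref{central extension} to the preimage group $G$ sitting over $H$ inside $\pi_1(\CSp(\xi))$, using as the central $\Z$ the image of $\pi_1(\CSp(2n,\R))$, which contains $\Gamma_V$ (or a power thereof) as an infinite-order central element by Proposition \ref{reeb loop frame inf order}; concluding $G$ is abelian forces $\Gamma_V^k$ to be central-and-commuting-away, giving $\Gamma_V^k=1$ and hence $k=0$.

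The main obstacle, and the step I would treat most carefully, is the filtration of central subgroups: in $\pi_1(V)$ the relevant central $\Z$ was the fiber class $\gamma_V$, whereas in $\pi_1(\CSp(\xi))$ the infinite-order central element provided by Proposition \ref{reeb loop frame inf order} is $\Gamma_V$ (or $\Gamma_V^d$ when $\gamma_V$ is $d$-torsion), and one must set up the central extension of $H$ by this $\Z$ correctly so that Lemma \ref{central extension} applies verbatim. Concretely, when $\gamma_V$ is torsion of order $d$ in $\pi_1(V)$, the projection of $\Gamma_V^k$ to $\pi_1(V)$ need not be central in the naive sense, so I would pass to the subgroup where the image of $\pi_1(\CSp(2n,\R))$ provides the central $\Z$, run Lemma \ref{central extension} there to deduce that $\Gamma_V^k$ is trivial, and then conclude as before that $k=\langle[\alpha],[\overline\delta]\rangle=0$ for every loop $\delta$; since $\delta$ (equivalently $\overline\delta$) is arbitrary, this yields $[\alpha]=0$. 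The converse implication is immediate, since $[\alpha]=0$ makes $L_\alpha$ lie in the identity component already at the level of strict contactomorphisms.
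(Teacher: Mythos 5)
Your overall strategy does coincide with the paper's: use Lemma \ref{L_alpha on pi_1 frame} to convert a contact isotopy $L_\alpha\simeq\Id$ into a conjugation relation $\widehat{\beta}\ast\widehat{\delta}\ast\widehat{\beta}^{-1}=\widehat{\delta}\ast\Gamma_V^k$ in $\pi_1(\CSp(\xi))$, then kill $\Gamma_V^k$ via Lemma \ref{central extension} and conclude $k=0$ from Proposition \ref{reeb loop frame inf order}. But the step you yourself flag as delicate contains a genuine gap, and your proposed fix does not close it. When $[\omega]$ is nonzero on $\pi_2(M)$, the fiber class $\gamma_V$ is torsion, say of order $d$, in $\pi_1(V)$, and the relation $\delta^{-1}\beta\delta\beta^{-1}=\gamma_V^k$ does \emph{not} let you conclude that $\delta$ and $\beta$ commute in $\pi_1(V)$. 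Taking $H\subseteq\pi_1(M)$ as you do, the preimage of $H$ in $\pi_1(\CSp(\xi))$ is a central extension whose kernel is an extension of $\Z_d$ by $\Z$, not $\Z$, and the intermediate quotient (the preimage of $H$ in $\pi_1(V)$) is a $\Z_d$-central extension of $H$ which need not be abelian: for $H\cong\Z\oplus\Z_2$ and $d$ even, the commutator pairing $H\times H\to\Z_d$ can take the nonzero value $d/2$, and such extensions exist. So Lemma \ref{central extension} does not apply, and "passing to the subgroup where the image of $\pi_1(\CSp(2n,\R))$ provides the central $\Z$" cannot be implemented: any subgroup containing $\widehat{\delta}$ and $\widehat{\beta}$ projects onto a subgroup of $\pi_1(V)$ containing $\delta$ and $\beta$, whose abelianness is exactly what is in question. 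The paper's missing maneuver is an iteration trick: since $\alpha\mapsto [L_\alpha]$ is a homomorphism into $\pi_0(\SCont(V,\lambda))$, one may replace $\alpha$ by $d\alpha$ (equivalently iterate $L_\alpha$), so that the new $k$ is divisible by $d$ and $\gamma_V^k$ is contractible in $V$; then $\delta$ and $\beta$ do commute in $\pi_1(V)$, they lie in an abelian subgroup $H\subseteq\pi_1(V)$ of rank at most one (its image in $\pi_1(M)$ has rank at most one and the fiber class is torsion), and $\pi_*^{-1}(H)\subseteq\pi_1(\CSp(\xi))$ is an honest $\Z$-central extension of $H$, the kernel being the image of $\pi_1(\CSp(2n,\R))$, which is infinite cyclic and central by monotonicity; Lemma \ref{central extension} then applies verbatim.

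A secondary inaccuracy: you run the frame-bundle argument uniformly, but when $[\omega]$ vanishes on $\pi_2(M)$ monotonicity may fail, the homomorphism \eqref{pi_1 to p_1} need not be injective (so the would-be central subgroup need not be $\Z$), and no power of $\Gamma_V$ lies in the image of $\pi_1(\CSp(2n,\R))$, since $\gamma_V$ then has infinite order in $\pi_1(V)$; your parenthetical claim that this image "contains $\Gamma_V$ (or a power thereof)" is false in that case. The paper sidesteps this by disposing of that case first through Proposition \ref{baby prop}, which suffices because a contact isotopy is in particular a smooth isotopy; you should make the same case split.
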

\begin{proof} If $[\omega]$ vanishes on $\pi_2(M)$ then this follows from Proposition \ref{baby prop}. Now, suppose $[\omega]$ does not vanish on $\pi_2(M)$ and $(M,\omega)$ is monotone on $\pi_2(M)$ with a non-vanishing monotonicity constant. If  $L_\alpha$ is isotopic to $\Id$ in $\Cont(V,\xi)$ and  $\widehat{\delta}$ is a loop in $\CSp(\xi)$, then by Lemma \ref{L_alpha on pi_1 frame}, $\widehat{\delta}$ is freely homotopic to $\widehat{\delta}\ast \Gamma_V^k$. So there is a loop $\widehat{\beta}$ in $\CSp(\xi)$ such that $\widehat{\beta}\ast\widehat{\delta}\ast\widehat{\beta}^{-1}=\widehat{\delta}\ast \Gamma_V^k$ in $\pi_1(\CSp(\xi))$. Let  $\delta$ and $\beta$ be the projections of $\widehat{\delta}$ and $\widehat{\beta}$ to $V$ respectively. Then $\delta^{-1}\beta\delta\beta^{-1}=\gamma_V^{k}$ where $\gamma_V^{k}$ is the projection of $\Gamma_V^k$. But, we can assume $\gamma_V^{k}$ to be contractible by iterating $L_{\alpha}$ (or equivalently, by considering a suitable multiple $n\alpha$ of $\alpha$). In this case,  $\delta$ commutes with $\beta$ in $\pi_1(V)$. It follows $\delta$ and $\beta$ are contained in an abelian subgroup $H$ of rank at most one since the $\S^1-$fiber of $V$ is torsion in $\pi_1(V)$ (by the assumption that $[\omega]$ does not vanish on $\pi_2(M)$) and every abelian subgroup of $\pi_1(M)$ has rank at most one. It follows, by Lemma \ref{central extension}, that the subgroup $\pi_*^{-1}(H)$ is an abelian subgroup of $\pi_1(\CSp(\xi))$ since it is a $\Z-$central extension of $H$ (the Reeb frame loop is of infinite order by Proposition \ref{reeb loop frame inf order}). Hence $\widehat{\delta}$ and $\widehat{\beta}$ are contained in the abelian subgroup $\pi_*^{-1}(H)$ and therefore  $\Gamma_V^k$ is contractible. But  by Proposition \ref{reeb loop frame inf order} this is possible only if  $k=\int_{\overline{\delta}} \alpha=0$. Since $\delta$ is arbitrary we get $\alpha=0$ in $H_{dR}^1(M,\Z)$.     
\end{proof}

Now let $(W,\omega_W)$ be a simply connected integral symplectic manifold. Consider the product symplectic manifold $M:=W\times \Sigma_g$, $\omega:=\omega_W\oplus \omega_0$, where $\omega_0$ is any integral symplectic form on the surface $\Sigma_g$ of genus $g\geq 2$ and let $\pi:(V,\lambda)\to (M,\omega) $ be a prequantization. Then $V$ restricted to $W\times \{pt\}$, denoted by $V|_W$, defines a prequantization of $(W,\omega_W)$. Note that $\pi_1(M)=\pi_1(\Sigma_g)$ and $\Gamma_\omega$ is trivial (see, for example, \cite{CamPed} Theorem 3) . Then by Theorem \ref{comp. of H}, the kernel of the homomorphism (which is surjective in this case) 
$$\pi_0(\SCont(V,\lambda))\to\pi_0(\Symp(M,\omega))$$ 
is given by $H^1_{dR}(M,\Z)=H^1_{dR}(\Sigma_g,\Z)\cong \Z^{2g}$.
So we get the homomorphism  $$\mathcal{I}:H^1_{dR}(\Sigma_g,\Z)\to \pi_0(\Cont(V,\xi))$$ and we have the natural homomorphism $\mathcal{J}: \pi_0(\Cont(V,\xi))\to \pi_0(\Diff(V)).$
\begin{proposition} \label{smooth} Suppose that the $\S^1-$action on $V|_W$, given by the Reeb flow, defines a torsion loop in $\pi_1(\Diff_0(V|_W))$ of order $k$. Then every mapping class in the subgroup 
$kH^1_{dR}(\Sigma_g,\Z)\subset \pi_0(\SCont(V,\lambda))$ is smoothly trivial.
\end{proposition}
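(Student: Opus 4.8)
The plan is to reduce the statement to a purely topological fact about the Reeb loop and then to build an explicit smooth isotopy.

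First I would discard the symplectically delicate part of $L_\alpha$. In $\Diff(V)$ the map $L_\alpha=\phi^X_{f\circ\pi}\circ\varphi_1$ is smoothly isotopic to the gauge transformation $\phi^X_{f\circ\pi}$, since the horizontal time-one map $\varphi_1$ is joined to $\Id$ by the isotopy $(\varphi_t)$ and hence $\phi^X_{f\circ\pi}\circ\varphi_t$ is a smooth isotopy from $\phi^X_{f\circ\pi}$ to $L_\alpha$. Writing $q\colon M\to\Sigma_g$ for the projection and, for a map $h\colon\Sigma_g\to\S^1$, setting $\Phi_h(p):=\phi^X_{h(q(\pi(p)))}(p)$ for the associated gauge transformation of the principal bundle $V\to M$, one checks that $\phi^X$ preserves fibers, so $\Phi_h^n=\Phi_{nh}$, and that $[L_\alpha]=[\Phi_{\bar g}]$ in $\pi_0(\Diff(V))$ whenever $\alpha=d\bar g$ with $\bar g\colon\Sigma_g\to\S^1$ pulled back to $M$ (a global constant rotation is absorbed by the Reeb flow). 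Because $\J\circ\I$ is a homomorphism and $H^1_{dR}(\Sigma_g,\Z)\cong\Z^{2g}$ is free, it suffices to show that $\J\circ\I$ sends each of a generating set to an element of order dividing $k$; concretely, for $\bar g$ representing a generator it suffices to prove that $\Phi_{k\bar g}=(\Phi_{\bar g})^k$ is smoothly isotopic to $\Id$.

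Next I would read $\Phi_{k\bar g}$ as a fiberwise Reeb rotation for the fibration $V\to\Sigma_g$ whose fiber is $V|_W$. Choosing a generator dual to an embedded simple closed curve $c\subset\Sigma_g$, represent it by a map $\bar g$ supported in an annular neighbourhood $A\cong c\times[0,1]$ of $c$ and winding once in the $[0,1]$-direction, so that $\Phi_{k\bar g}$ is the identity outside $V|_{W\times A}$. The decisive geometric point is that, since $W$ is simply connected and $c$ is a circle, the K\"unneth decomposition of $H^2(W\times c;\Z)$ has no cross term, so the Euler class of $V$ restricted to $W\times c$ is the pullback of that of $V|_W$; hence $V|_{W\times c}$ is $\S^1$-equivariantly the product $V|_W\times c$ and $V|_{W\times A}\cong V|_W\times c\times[0,1]$, with the Reeb action living only on the $V|_W$-factor. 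On this product $\Phi_{k\bar g}$ reads $(u,z,s)\mapsto(\gamma(s)u,z,s)$, where $\gamma\colon[0,1]\to\Diff_0(V|_W)$ is precisely the $k$-fold Reeb loop based at $\Id$.

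Finally I would invoke the hypothesis. Since the Reeb loop has order $k$ in $\pi_1(\Diff_0(V|_W))$, the loop $\gamma$ is null-homotopic rel endpoints; pick a based homotopy $\Gamma(s,\tau)$ with $\Gamma(\cdot,0)=\gamma$, $\Gamma(\cdot,1)\equiv\Id$ and $\Gamma(0,\tau)=\Gamma(1,\tau)=\Id$. Then $\Psi_\tau$, defined to be $(u,z,s)\mapsto(\Gamma(s,\tau)u,z,s)$ on $V|_W\times c\times[0,1]$ and the identity elsewhere, is a smooth isotopy from $\Phi_{k\bar g}$ to $\Id$, the boundary conditions on $\Gamma$ guaranteeing a smooth match with the identity over the complement of $A$. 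Summing over generators via the homomorphism property then kills all of $kH^1_{dR}(\Sigma_g,\Z)$. The step I expect to be the \emph{main obstacle} is exactly what forces the localization above: a null-homotopy of the map $\Sigma_g\to\Diff_0(V|_W)$ underlying $\Phi_{k\bar g}$ does \emph{not} in general yield an isotopy of $V$, because a generic path in $\Diff_0(V|_W)$ need not commute with the $\S^1$-structure group and so need not descend to the fibration $V\to\Sigma_g$; restricting the support to a curve $c$, over which the $\Sigma_g$-twisting of $V$ disappears, is precisely what removes this equivariance constraint and allows the homotopy to be applied fiberwise.
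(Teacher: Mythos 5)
Your proof is correct, and its overall architecture coincides with the paper's: absorb $\varphi_1$ into an isotopy so that only the gauge transformation $\phi^X_{f\circ\pi}$ matters, localize its support to $V|_{W\times A}$ for an annulus $A$ around a simple closed curve, trivialize this piece $\S^1$-equivariantly as $V|_W\times A$, read the gauge map there as the loop $s\mapsto$ (rotation by $f(s)$) in $\Diff_0(V|_W)$, i.e.\ the $k$-fold Reeb loop, and contract it using the torsion hypothesis. The one step where you genuinely diverge is the equivariant trivialization of $V|_{W\times A}$. The paper builds it by hand inside the geometry of $\lambda$: it first shows all loops $S_y=\{y\}\times\S^1\times\{0\}$ have the same holonomy by transporting $S_{y_0}$ with Hamiltonian diffeomorphisms $\varphi\times\Id$, then closes up horizontal lifts along the circle direction and along the intervals $\{s\}\times[0,\varepsilon]$. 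You instead invoke the classification of principal $\S^1$-bundles: since $W$ is simply connected and $c$ is a circle, $H^2(W\times c;\Z)\cong H^2(W;\Z)$ has no cross term, so the restricted Euler class is pulled back from $W$, and a principal bundle isomorphism is automatically equivariant; the extension over $A$ comes from the homotopy equivalence $W\times c\hookrightarrow W\times A$. This is shorter and avoids the paper's holonomy bookkeeping (including its ``without loss of generality'' normalization of the holonomy of $S_{y_0}$), at the cost of appealing to the classification theorem rather than producing the trivialization constructively. A further, harmless difference: you reduce to a generating set of $kH^1_{dR}(\Sigma_g,\Z)$ via the homomorphism property, whereas the paper represents an arbitrary element directly by a degree-$km$ function supported on one annulus; the two reductions are equivalent. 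One cosmetic point to tighten: for $\Psi_\tau$ to glue \emph{smoothly} with the identity you need $\Gamma(s,\tau)\equiv\Id$ for $s$ in neighbourhoods of $0$ and $1$, not merely at the endpoints; since $\gamma$ is constant near the endpoints (your $f$ is locally constant there), such a null-homotopy can always be arranged by reparametrization, so this is not a gap.
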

\begin{proof} Let $\sigma:\S^1\times [0,\epsilon]\to \Sigma_g$ be a symplectic embedding of a small cylinder and let $f:[0,\epsilon]\to \S^1=\R/\Z$ be a smooth function that vanishes near $0$ and $\epsilon$. Now, $f$ induces a function $\widetilde{f}:\Sigma_g\to \S^1$ as follows: extend $f$ to $\S^1\times [0,\epsilon]$ in the obvious way, so it induces a function on the image of $\sigma$ which can be extended to be identically zero outside. The differential form $d\widetilde{f}$ represents an integral cohomology class and, conversely, every integral cohomology is represented by such a function (with a suitable choice of $\sigma$ and the degree of $f$). Therefore, every element of $H^1_{dR}(M,\Z)$ can be represented as the differential of a function $\widetilde{f}\circ p$ where $p:M\to \Sigma_g$ is the projection and $\widetilde{f}$ is as above.

Recall that by \eqref{L_alpha}, the lift of $d(\widetilde{f}\circ p)$ to $\SCont(V,\lambda)$ is given by $\phi^X_{(\widetilde{f}\circ p)\circ\pi}\circ \varphi_1$  
 where $\varphi_1$ is the time-one-map of the flow $(\varphi_t)$ of the horizontal lift of the symplectic gradient of $d(\widetilde{f}\circ p)$. Since $\varphi_1$ is isotopic to identity, then one needs only to show that if the degree of $f$ is $k$ (where $f:[0,\epsilon]\to \S^1$ is seen as a loop) then $\phi^X_{(\widetilde{f}\circ p)\circ\pi}$ is isotopic to identity.
 
Let $\sigma:\S^1\times [0,\epsilon]\to \Sigma_g$ be as above and $f:[0,\epsilon]\to \S^1$ be such a map of degree $k$. Put $C$ to be the image of $\sigma$. Let $V|_C$ denote the restriction of the $\S^1-$bundle $V$ to $W\times C$ and $Z=V|_W$ denote the restriction of $V$ to $W\times \{pt\}$. Then the map $p\circ \pi:V|_C\to C$ defines a fiber bundle with fiber $Z$ and the $\S^1-$action preserves each fiber. Our claim is that $p\circ \pi:V|_C\to C$ is equivalent, in an $\S^1-$equivariant way, to $Z\times C\to C$ where $Z$ is seen as the prequantization of $(W,\omega_W)$. 
 
We first show that the bundle $p\circ \pi:V|_C\to C$ is trivial, in an $\S^1-$equivariant way, when restricted to $\S^1\times\{0\}\subset C$, where $C$ is identified with $\S^1\times [0,\varepsilon]$.
Fix $y_0\in W$ and suppose, without loss of generality, that the loop $S_{y_0}:=\{y_0\}\times \S^1 \times \{0\}$ has zero holonomy. Then for every $y\in W$, the loop $S_{y}:=\{y\}\times \S^1 \times \{0\}$ has also zero holonomy since for some $\varphi\in \Ham(W,\omega_W)$ with $\varphi(y_0)=y$, the Hamiltonian diffeomorphism $\varphi\times \Id$ of $M$ maps $S_{y_0}$ to $S_{y}$. Now, horizontal lifts of the loops $S_y$ for $y\in W$ produce an $\S^1-$equivariant identification of all the fibers $(p\circ \pi)^{-1}(z)$ for $z\in \S^1\times \{0\}$. 
Now we can argue in the same fashion and identify all the fibers $(p\circ \pi)^{-1}(z)$, for $z\in \S^1\times [0,\varepsilon]$, $\S^1-$equivariantly just by considering the foliation of $\S^1\times [0,\varepsilon]$ by intervals $\{s\}\times [0,\varepsilon]$ and their associated horizontal lifts.

Now, the map $\phi^X_{(\widetilde{f}\circ p)\circ\pi}$ reads $$\phi^X_{(\widetilde{f}\circ p)\circ\pi}: Z\times C\to  Z\times C,\ \ (q,(x,y))\mapsto (f(y)\cdot q,(x,y))$$
 where $f(y)\cdot q$ means the action of $f(y)\in \S^1$ on $q$. So, $\phi^X_{(\widetilde{f}\circ p)\circ\pi}$ has the form 
 $$(q,(x,y))\mapsto (\phi_y(q), (x,y))$$ where $\phi:[0,\epsilon]\to \Diff^0(Z)$ is a loop based at $\Id$. In our case, $\phi$ is contractible if $f$ has degree $k$ (since, by assumption, the Reeb flow defines a torsion loop of order $k$ in $\pi_1(\Diff^0(Z))$). This shows that $\phi^X_{(\widetilde{f}\circ p)\circ\pi}$ is isotopic to identity.
\end{proof}
\begin{corollary}\label{cor main} Let $(\Sigma_g,\omega_0)$ be a surface of genus $g\geq 2$ with an integral symplectic form and $\omega_{FS}$ be the Fubini-Study form on $\C P^n$, which is normalized so that the standard $\C P^1\subset \C P^n$ has unit area. Consider a prequantization  
$(V,\lambda)\to (\C P^n\times \Sigma_g, \omega_{FS}\oplus \omega_0).$ Then the homomorphism 
$$\mathcal{I}:H^1_{dR}(\Sigma_g,\Z)\to \pi_0(\Cont(V,\xi)),$$ 
is injective. Moreover $\ker(\J)$ contains the image of $\I$ for $n$ odd, and contains $\I(2H^1_{dR}(\Sigma_g,\Z))$ for $n$ even.
\end{corollary}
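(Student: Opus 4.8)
The plan is to prove the two assertions separately, obtaining the injectivity of $\I$ from Theorem \ref{main thm} and the statement on $\ker(\J)$ from Proposition \ref{smooth}, so the whole argument reduces to verifying hypotheses plus one $\pi_1$-computation.

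For injectivity, I would check that $M:=\C P^n\times\Sigma_g$ with $\omega:=\omega_{FS}\oplus\omega_0$ meets the hypotheses of Theorem \ref{main thm}. Since $g\geq 2$, the surface $\Sigma_g$ is aspherical, so $\pi_2(M)=\pi_2(\C P^n)\cong\Z$, generated by a line $\C P^1$. On this class $[\omega]$ restricts to $[\omega_{FS}]$, which evaluates to $1$ by the normalization, while $c_1(TM)$ restricts to $c_1(T\C P^n)=(n+1)[\omega_{FS}]$; hence $(M,\omega)$ is monotone on $\pi_2(M)$ with monotonicity constant $n+1\neq 0$. Moreover $\pi_1(M)=\pi_1(\Sigma_g)$, and every abelian subgroup of a closed hyperbolic surface group is cyclic, hence of rank at most one. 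Theorem \ref{main thm} then applies verbatim: $L_\alpha$ is contact isotopic to $\Id$ if and only if $[\alpha]=0$ in $H^1_{dR}(\Sigma_g,\Z)$, which is exactly the injectivity of $\I$.

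For the statement on $\ker(\J)$, I would first identify $V|_W$, the restriction of the prequantization to $W:=\C P^n$. Its Euler class is a lift of $-[\omega_{FS}]$, a generator of $H^2(\C P^n;\Z)$, so $V|_W\cong\S^{2n+1}$ and the Reeb flow is the Hopf circle action $\theta\cdot z=e^{2\pi i\theta}z$ on $\S^{2n+1}\subset\C^{n+1}$. By Proposition \ref{smooth} it then suffices to determine the order $k$ of this loop in $\pi_1(\Diff_0(\S^{2n+1}))$, after which $k\,H^1_{dR}(\Sigma_g,\Z)\subseteq\ker(\J)$. The Hopf loop factors through the linear action $\U(n+1)\subset\SO(2n+2)\subset\Diff_0(\S^{2n+1})$, so I would track it through $\pi_1$. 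As a loop of scalar matrices it is $(n+1)$ times the generator of $\pi_1(\U(n+1))\cong\Z$, and the inclusion-induced map $\pi_1(\U(n+1))\to\pi_1(\SO(2n+2))\cong\Z_2$ is reduction modulo $2$, since the generator of $\pi_1(\U(n+1))$ — a rotation in a single $\C$-line — maps to the generator of $\pi_1(\SO(2n+2))$. Thus the Hopf loop represents $(n+1)\bmod 2$ in $\pi_1(\SO(2n+2))$.

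Consequently, for $n$ odd the loop is already null-homotopic in $\SO(2n+2)$, hence in $\Diff_0(\S^{2n+1})$, so $k=1$ and Proposition \ref{smooth} shows that all of $\I(H^1_{dR}(\Sigma_g,\Z))$ is smoothly trivial. For $n$ even the loop represents the nontrivial class, so twice the loop bounds in $\SO(2n+2)$ and therefore in $\Diff_0(\S^{2n+1})$; thus the order $k$ divides $2$, and in either case $\I(2H^1_{dR}(\Sigma_g,\Z))\subseteq\I(k\,H^1_{dR}(\Sigma_g,\Z))\subseteq\ker(\J)$ by Proposition \ref{smooth}. I expect the genuinely delicate point to be the order computation in $\pi_1(\Diff_0(\S^{2n+1}))$: passing from $\SO(2n+2)$ to the diffeomorphism group is harmless because null-homotopic loops stay null-homotopic, but the entire parity dichotomy is produced by the map $\pi_1(\U(n+1))\to\pi_1(\SO(2n+2))\cong\Z_2$, so that identification must be pinned down carefully.
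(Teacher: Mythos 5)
Your proposal is correct and follows essentially the same route as the paper: injectivity via Theorem \ref{main thm} (monotonicity constant $n+1$ on $\pi_2(\C P^n)$, abelian subgroups of $\pi_1(\Sigma_g)$ cyclic), and the $\ker(\J)$ statement by identifying $V|_{\C P^n}$ with the Hopf bundle $\S^{2n+1}\to\C P^n$, tracking the diagonal Reeb loop through $\pi_1(\U(n+1))\to\pi_1(\SO(2n+2))\cong\Z_2$, and applying Proposition \ref{smooth}. If anything, you spell out the parity computation more explicitly than the paper and are slightly more careful on one point: you only claim the order in $\pi_1(\Diff_0(\S^{2n+1}))$ \emph{divides} $2$ for $n$ even (which is all Proposition \ref{smooth} needs), whereas the paper simply asserts the loop is of order $2$ in $\SO(2n+2)$.
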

\begin{proof} The injectivity of $\I$ follows from Theorem \ref{main thm}. Note that the prequantization $V|_{\C P^n}$ is the Hopf bundle $\S^{2n+1}\to \C P^n$ and the Reeb loop is given by the loop $t\mapsto diag(e^{2\pi it},...,e^{2\pi it})$ in $\U(n+1)$. This defines a loop in $\SO(2n+2)$ which is contractible if $n$ is odd and of order $2$ if $n$ is even. The rest is a direct consequence of Proposition \ref{smooth}.
\end{proof}

\end{document}